\newtheorem{theorem}{Theorem} 
\newtheorem{lemma}{Lemma}
\newtheorem{asm}{Assumption}
\DeclareMathOperator*{\argmin}{argmin}
\title{\LARGE \bf
Multi-agent Planning for thermalling gliders using multi level graph-search}
\author{
Muhammad Aneeq uz Zaman\thanks{Graduate Student, Dept. of Mechanical Science and Engineering, University of Illinois Urbana-Champaign, mazaman2@illinois.edu.}~,
Aamer Iqbal Bhatti \thanks{Professor of Controls and Signal Processing, Department of Electrical Engineering, aamer987@gmail.com.}\\
}
\begin{document}

\maketitle
\thispagestyle{empty}
\pagestyle{empty}

\begin{abstract}
This paper solves a path planning problem for a group of gliders. The gliders are tasked with visiting a set of \emph{interest points}. The gliders have limited range but are able to increase their range by visiting special points called \emph{thermals}. The problem addressed in this paper is of path planning for the gliders such that, the total number of interest points visited by the gliders is maximized. This is referred to as the multi-agent problem. The problem is solved by first decomposing it into several single-agent problems. In a single-agent problem a set of interest points are allocated to a single glider. This problem is solved by planning a path which maximizes the number of visited interest points from the allocated set. This is achieved through a uniform cost graph search, as shown in our earlier work. The multi-agent problem now consists of determining the best allocation (of interest points) for each glider. Two ways are presented of solving this problem, a brute force search approach as shown in earlier work and a Branch\&Bound type graph search. The Branch\&Bound approach is the main contribution of the paper. This approach is proven to be optimal and shown to be faster than the brute force search using simulations. 
\end{abstract}

\section{INTRODUCTION} \label{sec:Introduction}
UAVs are taking over a wide variety of applications in the modern world. These applications include precision agriculture \cite{zhang2012application}, public safety \cite{li2015drone} and surveillance \cite{kingston2008decentralized}. UAVs are effective particularly in long duration missions like surveillance applications. This is due to the increased reliability of the UAVs over their manned counterparts. But UAVs have limited range due to their steadily decreasing fuel. This constraint hampers the UAV's ability to carry out its mission. To increase the range of UAVs alternate methods of refueling need to be investigated. In this paper we will look at a specific environmental phenomenon which can be utilized to refuel the UAVs in-flight and hence increase its range.

The phenomenon which can be used to 'refuel' the UAVs is called a thermal. Thermals are columns of rising hot air which can help glider-like UAVs gain height and hence increase their range. The use of thermals has been shown to be a viable method of increasing flight range of gliders \cite{c1}. Therefore, we will be looking at how best to use this environmental phenomena to increase the range of the UAVs and potentially enable it to perform its mission better.

In this paper we deal with a particular surveillance mission where a set of glider-like UAVs (henceforth called gliders) have to visit a set of interest points. As the gliders travel they are constantly losing height and so their range is limited. We also have a set of thermals, which the gliders can visit to gain height and increase their range. Hence thermals can potentially enable the gliders to visit even more interest points. 

This problem is referred to as the multi-agent problem in this paper. To solve this problem we need to plan paths for the gliders which are \emph{feasible} and \emph{valid}. A path is considered feasible if it satisfies the dynamic constraints of the glider and valid if the glider's height remains non-negative as it follows the path. Furthermore, the paths should maximize the number of interest points visited by the glider. Finally, if there are multiple (valid and feasible) paths which visit the same number of interest points, the one with the least arclength needs to be chosen.


We first introduce a class of curves which we refer to as composite paths. A composite path provides a feasible path for a glider over a sequence of waypoints. Then the multi-agent problem is solved by using the composite paths and a multi level graph search. This approach is an improvement over our earlier work \cite{uzpath}. The multi level graph search is composed of upper and lower level graph search. 

The multi-agent problem is first decomposed into multiple single-agent problems. In a single-agent problem, a subset of interest points is \textit{allocated} to a glider. A valid composite path is determined for the glider, which maximizes the number of 'allocated' visited interest points. Note that a glider may not be able to visit all of its allocated interest points due to validity constraints. As shown in our earlier work \cite{uzpath} we use a uniform cost graph search to obtain the solution. This is called lower level graph search because it forms the lower level of the multi level graph search.

The multi-agent problem is now solved by finding the best allocation of interest points to each glider. The allocation which maximizes the number of interest points visited by the gliders collectively, is the best allocation. In our earlier work \cite{uzpath} this problem was solved via a brute force search method. This paper introduces a Branch\&Bound graph search type method to solve this problem. This is called upper level graph search since it forms the upper level of the multi level graph search. We introduce two new notions of \emph{ideality} and \emph{weakness} which help in proving the optimality of the Branch\&Bound algorithm. We also show that the Branch-and-Bound approach is faster than the brute force approach. The main contribution of the paper is the Branch\&Bound algorithm and its proof of optimality. 

\subsection{Related Work}
To the best of the author's knowledge the multi-agent problem has not been dealt with in literature. There are two main aspects of the problem. Firstly, planning a feasible path for a glider given a waypoint visitation order. Secondly, the determining the best waypoint visitation order for each glider.

Planning feasible paths for the gliders entails finding a class of interpolating curves whose curvature and sharpness profiles are bounded. These bounds depend on the dynamics of the gliders as explained in section \ref{sec:probstat}. Dubin's curves are widely used to plan paths with bounded curvature, like in \cite{barraquand1989nonholonomic} and \cite{laumond1994motion}. Dubin's curves have bounded but discontinuous curvature which leads to unbounded sharpness. Paths with continuous curvature have been proposed by authors of \cite{boissonnat1994note} and \cite{kostov1998irregularity}, but these papers do not consider bounds on sharpness. Papers like \cite{c29} proposed iterative methods for finding curves with bounds on curvature and sharpness. Authors of paper \cite{scheuer1997continuous} propose a closed form but suboptimal method for planing paths with bounded curvature and sharpness. 

In our earlier work \cite{uzpath} we modified this approach to plan feasible paths for the gliders. The Continuous Curvature (CC) turn (as introduced in \cite{scheuer1997continuous}) is used to plan a path from a given starting point and orientation to an end point. The curve has a piece-wise linear curvature profile which satisfies the feasibility constraints by design. Moreover the arclength of the resulting curve can also be calculated in closed form. This is helpful in determining the validity of the path.

The problem of determining the best waypoint visitation order for each glider is a variant of the Team Orienteering Problem (TOP). It can be thought of as an asymmetric TOP with refueling points. In TOP \cite{c14} a group of robots have to visit a set of points while minimizing the total distance traveled. Exact approaches to solve the (Symmetric) TOP have been proposed in \cite{c8} and \cite{c9}. There has also been work in heuristic methods to solve this problem, like in \cite{c19}, \cite{c10}. Authors of \cite{c13} proposes a multi level graph-search based method for this purpose. 

This paper uses the idea of multi level graph search to solve the multi-agent problem. The reason for using this approach over others is that multi level graph search is very effective at dealing with highly nonlinear constraints like the validity constraint and it is provably optimal. The lower level of the graph search, which is a uniform cost graph search, determines the best waypoint visitation order for a glider given that a set of waypoints have been allocated to it. This algorithm was developed in our earlier work \cite{uzpath}. The upper level of the graph search, which is a Branch\&Bound graph search, determines the best allocation of waypoints to gliders. This algorithm is proved to be optimal. It is also shown to be faster than the brute force search approach used in \cite{uzpath}. This algorithm and its proof of optimality are considered to be the main contributions of this paper.


The paper is organized as follows. Section \ref{sec:probstat} formulates the problem rigorously. Section \ref{sec:point2point} deals with, planning a composite path for a glider over any given waypoint visitation order. Section \ref{sec:singleglider} solves the single-agent problem using the lower-level graph search and introduces the new concepts of \emph{ideality} and \emph{weakness}. These concepts are used in Section \ref{sec:multiglider} which describes how to solve the multi-agent problem using the upper-level graph search. In sections \ref{sec:singleglider} and \ref{sec:multiglider} the notation related to the graph search is developed separately. This notation is specific to the solution presented in the paper while the notation in section \ref{sec:probstat} is general. Section \ref{sec:simul} presents the simulation results of the proposed solution. Section \ref{sec:conc} concludes the paper. Notation pertinent to each section is developed in the section itself.

\section{Problem Statement} \label{sec:probstat}
This section will present the mathematical formulation for the single-agent and multi-agent problems separately. First, however,  we introduce some details of the problem scenario.

There are a total of $n_{ip}$ \emph{interest points} each denoted by $ip_j$ where $j \in \{1,..,n_{ip}\}$. These points are located at positions $p_{ip_j} \in \mathbb{R}^2$. There are $n_t$ \emph{thermals} and each is denoted by $t_k$ where $k \in \{1,..,n_t\}$. The height gained by visiting thermals and the positions of the thermals are assumed to be known a priori. Moreover, the thermals are treated as points and we assume that the gliders gain height instantaneously when they visit a thermal. They are positioned at $p_{t_k} \in \mathbb{R}^2$ and the height any glider can gain by visiting them is denoted by $h_{t_k} \in \mathbb{R}^+$. Similarly we define that the height attained by visiting an interest point is $0$, $h_{ip_j}=0, \forall j$.

We have a total of $n_g$ identical gliders. Each glider $i$ has a pre-specified start position $p^0_{i} \in \mathbb{R}^2$ and orientation $\theta^0_{i} \in [-\pi,\pi]$. The gliders must reach their respective final positions located at $p^f_{i} \in \mathbb{R}^2$. The starting height of the gliders is denoted by $h^0_{i} \in \mathbb{R}^+$. Figure \ref{fig:figdispvisorder} presents a simple scenario with two gliders with starting points ($p^0_1$, $p^0_2$) as circles and final positions ($p^f_1$, $p^f_2$) as crosses. There is also a thermal $t_1$ as a diamond and three interest points $ip_1$, $ip_2$ and $ip_3$ as squares.

Ideally, the height lost per horizontal distance traveled by the glider should be minimized. This corresponds to minimizing the angle of descent $\gamma_{d,i}$ of the glider. As shown in \cite{c28} this can be achieved by choosing an appropriate value of angle of attack. For this angle of attack, the corresponding angle of descent is $\gamma_{d_{min}}$. Since the gliders are identical, it is the same for all gliders. We assume that a controller ensures $\gamma_{d,i} = \gamma_{d_{min}}$ for all gliders. Now we may ignore the vertical degree of freedom of the gliders.

\subsection{Single-agent problem}
Each glider $i$ is allocated a set of interest points denoted by $\boldsymbol{\xi}_i$, where $\boldsymbol{\xi}_i \subseteq \{ip_{1},..,ip_{n_{ip}}\}$. The path glider $i$ takes through $\boldsymbol{\xi}_i$ is called a \emph{composite path}. The composite path is composed of multiple legs, where each leg is a path from one waypoint to another.

\begin{figure}[h!] 
	\centering
	\includegraphics[width=0.4 \textwidth]{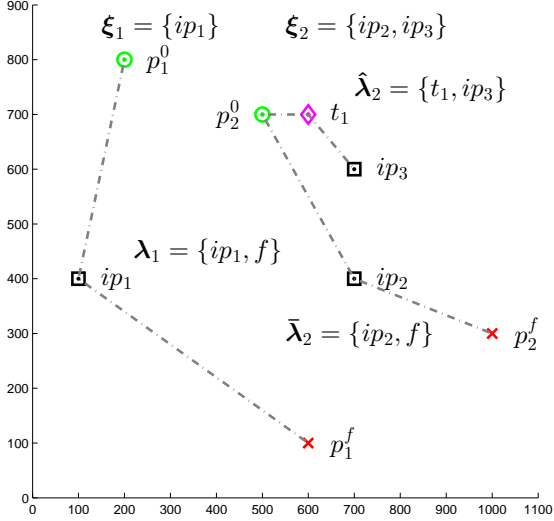}
	\caption{Some possible waypoint visitation orders.}
	\label{fig:figdispvisorder}
\end{figure} 

\subsubsection{Visitation order}The order in which the glider $i$ visits the waypoints is denoted by $\boldsymbol{\lambda}_i$ such that,
\begin{multline*}
\boldsymbol{\lambda}_i = \{\lambda_{i,j}\} : \lambda_{i,j} \in \boldsymbol{\xi}_i \cup \{t_1,..,t_{n_t}\} \cup \{f\},\\ i \in \{1,..,n_g\}, j \in \{1,..,n(\boldsymbol{\lambda}_i)\} \,,
\end{multline*}
where $f$ represents the final position of the glider and $n()$ is the cardinality operator. $\lambda_{i,j}$ is the $j$th waypoint in $i$th glider's composite path. The definition given above states that it can only be an interest point allocated to $i$, a thermal or the final position of $i$. Notice that a glider does not have to visit all its allocated interest points. The total number of unvisited interest points in $\boldsymbol{\lambda}_i$ is represented by $K_L(\boldsymbol{\lambda}_i,\boldsymbol{\xi}_i)$,
\begin{equation} \label{eq:unvisintpt}
K_L(\boldsymbol{\lambda}_i,\boldsymbol{\xi}_i) =  n(\boldsymbol{\xi}_i) - \sum^{n_{ip}}_{k=1} \mathbbm{1}_{\boldsymbol{\lambda}_i}(k)
\end{equation}
The function $\mathbbm{1}_{\boldsymbol{\lambda}_i}(k)$ is an indicator function which is $1$ if $k \in \boldsymbol{\lambda}_i$ and 0 otherwise.

Figure \ref{fig:figdispvisorder} shows a particular scenario in which the first interest point $ip_1$ has been alloted to glider 1 and $ip_2$ and $ip_3$ have been alloted to glider 2. $\hat{\boldsymbol{\lambda}}_2 = \{t_1,ip_3\}$ and $\bar{\boldsymbol{\lambda}}_2 = \{ip_2,f\}$ are two possible visitation orders for glider 2 and $\boldsymbol{\lambda}_1 = \{ip_1,f\}$ is a possible visitation order for glider 1. In the paper sometimes we use accents like $\hat{}$, $\bar{ }$ and $'$ to represent specific visitation orders and allocations. From the figure we can see that $K_L(\bar{\boldsymbol{\lambda}}_2,\boldsymbol{\xi}_2) = K_L(\hat{\boldsymbol{\lambda}}_2,\boldsymbol{\xi}_2) = 1$ and $K_L(\boldsymbol{\lambda}_1,\boldsymbol{\xi}_1) = 0$.

\subsubsection{Composite path and height profile}
Each $\boldsymbol{\lambda}_i$ has a composite path associated with it which visits all the waypoints in $\boldsymbol{\lambda}_i$. The $j$th leg in the composite path is denoted by $r_{i,j}$. It is parameterized by arclength of the leg $l_{i,j} \in [0,l^f_{i,j}]$, where $l^f_{i,j}$ is the total arclength of the leg. The glider dynamics have non-holonomic constraints, meaning $r_{i,j}$ is defined by the orientation of the glider $\theta_{i,j}$,
\begin{equation}
r_{i,j}(l_{i,j},\boldsymbol{\lambda}_i) =  \int\limits_{0}^{l_{i,j}}
\left(
\begin{array}{c} \cos(\theta_{i,j}(u)) \\ \sin(\theta_{i,j}(u)) \end{array}
\right) du + r_{i,j}(0,\boldsymbol{\lambda}_i) \,,
\end{equation}
\begin{equation}
	\kappa_{i,j}(l_{i,j},\boldsymbol{\lambda}_i) = d\theta_{i,j}(l_{i,j})/dl_{i,j} \,,
\end{equation}
\begin{equation}
	\sigma_{i,j}(l_{i,j},\boldsymbol{\lambda}_i) = d\kappa_{i,j}(l_{i,j})/dl_{i,j} \,,
\end{equation}
\begin{equation} \label{eq:dh/dl}
	d h_{i,j}(l_{i,j},\boldsymbol{\lambda}_i)/d l_{i,j} = -\tan(\gamma_{d_{min}}) \,,
\end{equation}
$\kappa_{i,j}$ and $\sigma_{i,j}$ denote the curvature and sharpness of the leg, respectively. $h_{i,j}$ represents the height of the glider, which is decreasing at a constant rate throughout a leg. $\boldsymbol{\lambda}_i$ in the above stated equations may sometimes be omitted for simplicity. The first leg must satisfy start configuration constraints,
\begin{equation} \label{boundarycond}
r_{i,1}(0) = p^0_{i},\hspace{0.2cm} \theta_{i,1}(0) = \theta^0_{i}, \hspace{0.2cm} \forall i \,.
\end{equation}
All the following legs must start at corresponding waypoints in $\boldsymbol{\lambda}_i$ and maintain continuity of orientation with the last leg,
\begin{equation}
r_{i,j}(0) = p_{\lambda_{i,j-1}}, \hspace{0.2cm} \theta_{i,j}(0) = \theta_{i,j-1}(l^f_{i,j}) \,.
\end{equation}
Similarly, each leg $j$ in the composite path should end at either its corresponding waypoint or the final position,
\begin{equation} \label{boundarycond2}
r_{i,j}(l^f_{i,j}) = \left\{ \begin{array}{ll}
p_{\lambda_{i,j}} & \text{if } \lambda_{i,j} \neq f\\
p^f_{i} &  \text{if } \lambda_{i,j} = f \,.
\end{array} \right.
\end{equation}
Each leg should also satisfy the continuity of curvature between consecutive legs,
\begin{equation} \label{contposorientcurv}
\kappa_{i,j+1}(0) = \kappa_{i,j}(l^f_{i,j})\,.
\end{equation}
The glider has a starting height $h^0_{i}$ and whenever the glider visits a thermal it gets an increase in its height,
\begin{equation*}
h_{i,1}(0) = h^0_{i}, \hspace{0.2cm} h_{i,j+1}(0) = h_{i,j}(l^f_{i,j}) + h_{\lambda_{i,j}} \,.
\end{equation*}

\subsubsection{Feasibility and Validity}The composite paths must satisfy the dynamic constraints on the glider. The gliders have a constraint on the maximum roll angle, which means that the glider cannot execute tight turns. This corresponds to a maximum curvature constraint on the path. Moreover, there is an upper limit on the roll rate the glider can achieve. This corresponds to an upper limit on the sharpness of the path. Hence, a path must have bounded curvature and sharpness,
\begin{equation} \label{cons:kappa_max}
| \kappa_{i,j}(l_{i,j}) | \leq \kappa_{max}: 0 \leq l_{i,j} \leq l^f_{i,j} \,, \forall i, \forall j \,,
\end{equation}
\begin{equation} \label{cons:sigma_max}
| \sigma_{i,j}(l_{i,j}) | \leq \sigma_{max}: 0 \leq l_{i,j} \leq l^f_{i,j} \,, \forall i, \forall j \,.
\end{equation}
In the paper we assume that $\kappa_{max}$ and $\sigma_{max}$ have been predetermined for the gliders. This is called the \emph{feasibility} constraint. 

Furthermore, the height of the glider must always be positive. This is called the \emph{validity} constraint,
\begin{equation*}
h_{i,j}(l_{i,j}) \geq 0 \text{ for } 0 \leq l_{i,j} \leq l^f_{i,j}\,, \forall i, \forall j \,.
\end{equation*}
This condition is equivalent to $h_{i,j} (l^f_{i,j}) > 0$, since $h_{i,j}$ is constantly decreasing throughout a leg as per equation \eqref{eq:dh/dl}. This constraint can be expressed as a constraint on the arclength of the composite path as shown below.
\begin{equation} \label{cons:h}
h_{i,j} (l^f_{i,j}) = h^0_{i} + \sum\limits_{l = 1}^{n_t} \mathbbm{1}_{\boldsymbol{\lambda}_i}(t_l)h_{t_l} - \tan(\gamma_{d_{min}}) \sum_{k=1}^{j} l_{i,k}^f > 0 \,.
\end{equation}
this is due to the fact that $h_{i,j} (l^f_{i,j})$ is the height of the glider at the start of the composite path minus the net height lost while traveling.

\subsubsection{Optimization Problem}The problem is to find a visitation order which minimizes unvisited allocated interest points. The order should also be complete, meaning that it ends at the final position for that particular glider. If there are multiple visitation orders which satisfy these criteria, we should find the one with smallest arclength of the composite path,
\begin{multline} \label{objfuncsingle}
\argmin_{\boldsymbol{\lambda}^*_i} \sum_{\forall j} l^f_{i,j}
\hspace{0.2cm} \text{s.t.} \hspace{0.2cm}
\boldsymbol{\lambda}^*_i \in \argmin_{\boldsymbol{\lambda}_i}  K_L(\boldsymbol{\lambda}_i,\boldsymbol{\xi}_i) \\
\text{s.t. } \lambda_{i,n(\boldsymbol{\lambda}_i)} = f \text{ and } r_{i,j}(\boldsymbol{\lambda}_i) \text{ satisfies } \eqref{cons:kappa_max}-\eqref{cons:h} \,.
\end{multline}

The order $\boldsymbol{\lambda}_i$ which satisfies \eqref{objfuncsingle}, represents the best visitation order for a given waypoint allocation $\boldsymbol{\xi}_i$ to glider $i$. We call this the \emph{optimal} visitation order $\boldsymbol{\lambda}^{\boldsymbol{\xi}_i}_i$ for waypoint allocation $\boldsymbol{\xi}_i$. Similarly, the optimal path and the number of unvisited interest points for allocation $\boldsymbol{\xi}_i$ are denoted similarly as $r^{\boldsymbol{\xi}_i}_{i,j}$ and $K_U(\boldsymbol{\xi}_i)$, respectively.
\begin{equation} \label{lambdaxi=lambdaphi}
	\boldsymbol{\lambda}^{\boldsymbol{\xi}_i}_i = \boldsymbol{\lambda}_i, \hspace{0.2cm} r^{\boldsymbol{\xi}_i}_{i,j} = r_{i,j}(\boldsymbol{\lambda}_i), \hspace{0.2cm} K_U(\boldsymbol{\xi}_i) = K_L(\boldsymbol{\lambda}_i,\boldsymbol{\xi}_i) \,.
\end{equation}
\subsection{Multi-agent problem}
All $\boldsymbol{\xi}_i$ are mutually exclusive meaning, $\boldsymbol{\xi}_i \cap \boldsymbol{\xi}_j = \emptyset, i \neq j$. The set of waypoint allocation for all gliders is denoted by $\boldsymbol{\Xi} = \{\boldsymbol{\xi}_i\}$.

The multi-agent problem is to find the jointly exhaustive allocations $\boldsymbol{\xi}_i$ with the least number of unvisited interest points. If there are multiple allocations with the same minimum number of unvisited interest points, we should find the one with least accumulated arclength of composite paths.
\begin{multline} \label{optupper}
\argmin_{\boldsymbol{\Xi}^*} \sum_{i=1}^{n_g} \sum_{ \forall j} l^f_{i,j}(\boldsymbol{\lambda}^{\boldsymbol{\xi}^*_i}_i)
\hspace{0.2cm} \text{s.t.} \hspace{0.2cm}
\boldsymbol{\Xi}^* \in \argmin_{\boldsymbol{\Xi}} \sum_{i=1}^{n_g} K_U(\boldsymbol{\xi}_i) \\
\text{such that } \bigcup_{i=1}^{n_g}\boldsymbol{\xi}_i = \{ip_{1},..,ip_{n_{ip}}\}.
\end{multline}
\subsection{Assumptions}
\begin{figure}[t]
	\centering
	\begin{minipage}{.25\textwidth}
		\centering
		\includegraphics[width=1.0\textwidth]{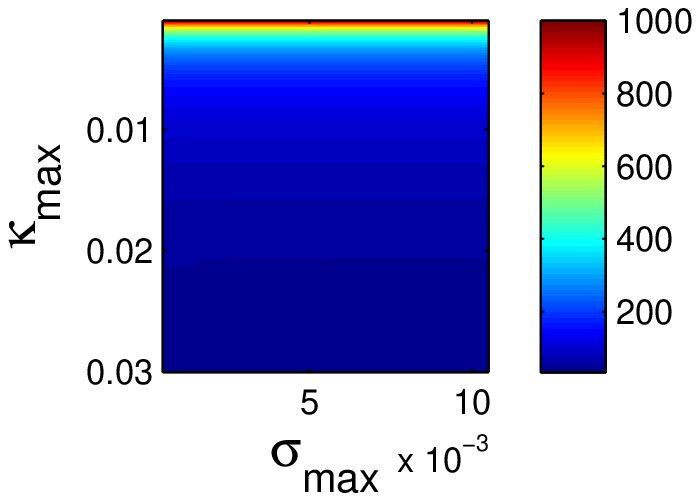}
		\caption{$R_T$ plotted against $\kappa_{max}$ and $\sigma_{max}$}
		\label{fig:R_T_versus_kappa_max}
	\end{minipage} \hspace{0.1cm}
	\begin{minipage}{.21\textwidth}
		\centering
		\includegraphics[width=1\textwidth]{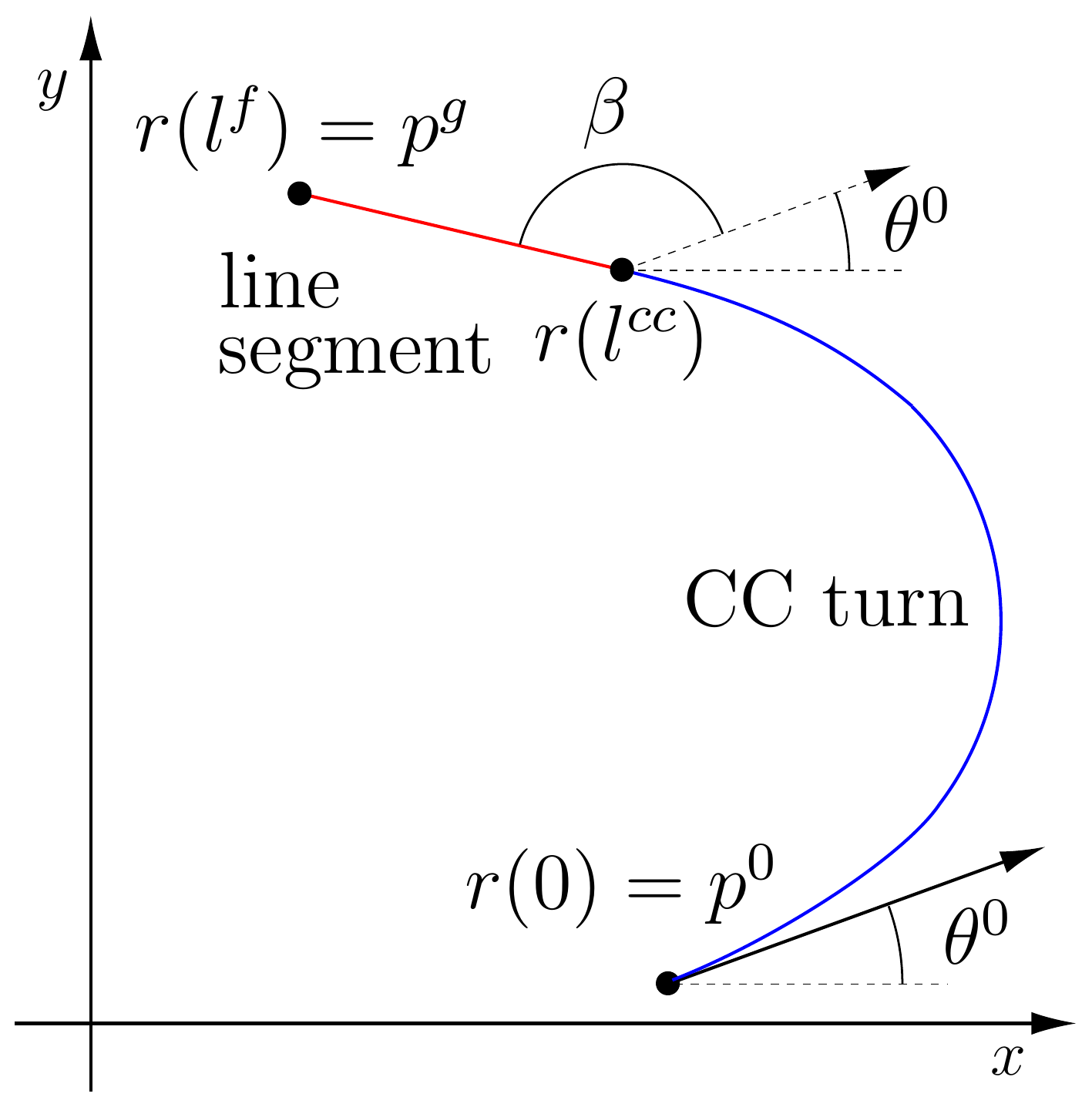}
		\caption{a leg in the composite path}
		\label{fig:ccleg}
	\end{minipage}
\end{figure}
\begin{asm} \label{asm:elementary}
	$\theta_{lim} \triangleq \kappa_{max}^2/\sigma_{max} < \pi$.
\end{asm}
\begin{asm} \label{asm:l_min>2R_T}
	$l_{min} > 2 R_T(\kappa_{max},\sigma_{max})$, where $R_T \in \mathbb{R}^+$ is defined in \cite{scheuer1997continuous} and $l_{min}$ is the smallest euclidean distance between any two waypoints, start points or final points of any glider.
\end{asm}
Assumption \ref{asm:elementary} is a carryover from \cite{scheuer1997continuous} and is needed for the legs to be feasible. Assumption \ref{asm:l_min>2R_T} is needed to make sure that the start and end point of the leg are not so close that the particular kind of paths introduced in \cite{scheuer1997continuous} become unachievable. These have been discussed in \cite{uzpath}. 

\section{Planning a composite path for a given visitation order} \label{sec:point2point}
The procedure to construct a feasible composite path for a given visitation order has been discussed in detail in our earlier work \cite{uzpath}. This section provides a brief summary. The composite path has a piece-wise linear curvature profile and it is designed to satisfy feasibility by construction. We start with how to construct a leg in the composite path and calculate its arclength. Then the procedure to construct the composite path is elaborated. This section is concluded by finding an upper bound on the ratio between arclength of a leg and the euclidean distance between its the start and the goal position. This upper bound is used later on in the paper.

A leg in the composite path provides a path from a start configuration (position and orientation) to a goal position. The leg has a piece-wise linear curvature profile. The leg is composed of two segments: (i) a Continuous Curvature (CC) turn that changes the orientation of the path by an angle $\beta$, and (ii) a line segment that completes the leg. The usage of CC turn guarantees that the curvature and sharpness of the leg are bounded, if assumptions \ref{asm:elementary} and \ref{asm:l_min>2R_T} are satisfied. A leg obtained by this methodology is shown in Figure \ref{fig:ccleg}.

\subsection{Constructing a Leg} \label{planusingccturns}

In this section we will focus on one leg denoted by $r(l)$. Subscripts $i$ and $j$ have been omitted for simplicity. The starting position of the leg is denoted by $r(0) = p^0$, starting orientation by $\theta(0) = \theta^0$ and goal position by $r(l^f) = p^g$. This is depicted in Figure \ref{fig:ccleg}. Without loss of generality we assume that $p^g$ lies on the left hand side of the start configuration.

The CC turn starts at $r(0)$ and ends at $r(l^{cc})$ where $l^{cc} < l^f$ as shown in Figure \ref{fig:ccleg}. The purpose of the CC turn is to change the orientation of the leg until it points to the final position. Therefore, the first step is to calculate the required change in orientation $\beta$. The value of $\beta$ is calculated in \cite{uzpath}. This is accomplished by using the fact that the CC turns satisfy two conditions. 1) The end points of a CC turn always exist on the circle $C^f_l$, and 2) the CC turn always makes an angle $\gamma$ with the tangent to the circle $C^f_l$ at these endpoints.

There are two types of CC turns depending on the value of $\beta$ and $\theta_{lim}$. For the case $\beta \in [0, \theta_{lim})$, the curvature profile of the CC turn is,
\begin{equation}
\kappa(l) = \left\{ 
\begin{array}{cc}
\sigma_e l & \text{for } 0 \leq l \leq l^{cc}/2 \\
\sigma_e (l^{cc} - l)  & \text{for } l^{cc}/2 < l \leq l^{cc}
\end{array}
\right.
\end{equation}
where the expression for $\sigma_e$ is given in \cite{scheuer1997continuous}. Otherwise, if $\beta \in [\theta_{lim}, 2 \pi]$, the curvature profile of the CC turn is,
\begin{multline}
\kappa(l) = \left\{ 
\begin{array}{ll}
\sigma_{max} l & \text{for } 0 \leq l \leq l^{cl} \\
\kappa_{max}  & \text{for } l^{cl} < l \leq l^{cc} - l^{cl} \\
\sigma_{max}(l^{cc}-l) & \text{for } l^{cc} - l^{cl} < l \leq l^{cc}
\end{array}
\right.
\end{multline}
where $l^{cl} = \kappa_{max}/\sigma_{max}$.

The arclength of each CC turn (as given in \cite{scheuer1997continuous}), is,
\begin{equation} \label{l_cc}
l^{cc}(\beta) = \left\{
\begin{array}{ll}
2 \sqrt{\beta/\sigma_e} & \text{if } \beta \in [0, \theta_{lim}) \\
\beta/\kappa_{max} + \kappa_{max}/\sigma_{max} & \text{if } \beta \in [\theta_{lim}, 2 \pi] \,.
\end{array}
\right.
\end{equation}
The orientation at the end of the CC turn is,
\begin{equation}
\theta(l^{cc}) = \theta(0) + \beta \,.
\end{equation}
The next part in the leg is a line segment which starts at $r(l^{cc})$ and ends at $r(l^f)$. The orientation, curvature and sharpness of the line segment is,
\begin{equation}
\theta(l) = \theta(l^{cc}), \hspace{0.2cm} \kappa(l) = \sigma(l) = 0 \text{ for } l^{cc} < l \leq l^f
\end{equation}
The total arc length of the leg $l^f$ has been derived in \cite{uzpath}. Now, each leg $r(l)$ can be constructed by using the curvature profile $\kappa(l)$ of the leg and its initial conditions.

\subsection{Composite Path} \label{sec:comppath}
For every combination of glider and visitation order there is a unique composite path. The composite path is constructed by using the boundary conditions \eqref{boundarycond}-\eqref{boundarycond2} along with the procedure to construct the leg. The curvature constraint in equation \eqref{contposorientcurv} is automatically satisfied since curvature and sharpness of a leg at $l=\{0, l^{cc}, l^f\}$ is $0$. A composite path satisfies the feasibility constraint by design and the validity of the path can be determined by calculating its arclength. The arclength of each composite path is calculated by summing the arclengths of its constituent legs.

\subsection{Upper bound on ratio between length of the leg $l^f$ and euclidean distance $l^e$}
In this section we obtain an upper bound on the ratio $l^f/l^e$, where $l^e$ is the euclidean distance between $p^g$ and $p^0$.
\begin{equation} \label{l^e}
	l^e \left( p^g, p^0 \right) = ||p^g - p^0 ||_2  \,,
\end{equation}
The following theorem presents the result. This result is used to guarantee optimality of the upper level graph search.
\begin{lemma} \label{thm:epsilon1}
	If assumptions \ref{asm:elementary} and \ref{asm:l_min>2R_T} are met, the ratio $l^f/l^e \leq \mathcal{R}_{max}$, which is given by,
	\begin{multline} \label{R_max}
		\mathcal{R}_{max} = \frac{\sqrt{(l_{min} + R_T)^2 - R_M^2}}{l_{min}} + \mathcal{R}^{cc}_{max} \\
		\text{where } \mathcal{R}^{cc}_{max} = \frac{\max \left(\frac{\beta_{max}(l_{min})}{\kappa_{max}} + \frac{\kappa_{max}}{\sigma_{max}} , \frac{4.66 R_T}{l_{min}} \right)+ R_T}{l_{min}}
	\end{multline}
\end{lemma}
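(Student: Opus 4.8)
The plan is to decompose each leg into its two geometric pieces and bound their contributions to $l^f/l^e$ independently, using throughout that every leg connects two of the special points, so $l^e \ge l_{min}$ by Assumption~\ref{asm:l_min>2R_T}. Writing $l^f = l^{cc} + l^{line}$, where $l^{cc}$ is the arclength of the CC turn and $l^{line}$ is the length of the terminal line segment running from $r(l^{cc})$ to $p^g$, the split $l^f/l^e = l^{cc}/l^e + l^{line}/l^e$ lets me identify the first summand of $\mathcal{R}_{max}$ with the line-segment term and $\mathcal{R}^{cc}_{max}$ with the turn term, and bound each separately.

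For the line segment, I would exploit that $r(l^{cc})$ lies on the circle $C^f_l$ and that the segment departs along the orientation $\theta^0 + \beta$, which the construction fixes relative to the tangent of $C^f_l$ through the angle $\gamma$. This should yield a right-triangle (tangent-length) relation of the form $l^{line} \le \sqrt{||p^g - \Omega||^2 - R_M^2}$, with $\Omega$ the center of $C^f_l$ and $R_M$ the associated moment arm. Bounding $||p^g - \Omega|| \le ||p^g - p^0|| + ||p^0 - \Omega|| \le l^e + R_T$ by the triangle inequality gives $l^{line}/l^e \le \sqrt{(l^e + R_T)^2 - R_M^2}\,/\,l^e$. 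I would then verify that the right-hand side is decreasing in $l^e$: differentiating its square shows this holds precisely when $l^e > (R_M^2 - R_T^2)/R_T$, a condition that $l_{min} > 2R_T$ guarantees as long as $R_M < \sqrt{3}\,R_T$, which the CC-turn geometry supplies. The maximum over $l^e \ge l_{min}$ is therefore attained at $l^e = l_{min}$, producing exactly the first term of \eqref{R_max}.

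For the CC turn, $l^{cc}$ is a function of the deflection $\beta$ alone through \eqref{l_cc}, and the worst case occurs when the goal is closest, so I would bound $\beta \le \beta_{max}(l_{min})$, the largest deflection the construction can demand at separation $l_{min}$, as computed in \cite{uzpath}. Splitting into the two regimes of \eqref{l_cc}: for $\beta \in [\theta_{lim}, 2\pi]$ one has $l^{cc} = \beta/\kappa_{max} + \kappa_{max}/\sigma_{max} \le \beta_{max}(l_{min})/\kappa_{max} + \kappa_{max}/\sigma_{max}$, while for $\beta \in [0, \theta_{lim})$ the closed form $2\sqrt{\beta/\sigma_e}$ is controlled by the geometric constant $4.66\,R_T$ inherited from \cite{scheuer1997continuous}. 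Taking the larger of the two, dividing by $l^e \ge l_{min}$, and adding an $R_T$ term to account for the displacement of the true endpoint $r(l^{cc})$ relative to the reference point used in the line-segment estimate yields $\mathcal{R}^{cc}_{max}$. Summing the two bounds then gives $l^f/l^e \le \mathcal{R}_{max}$.

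The main obstacle I anticipate is the first geometric step: pinning down $\Omega$, $R_T$ and $R_M$ from the CC-turn construction and verifying that the tangent/right-triangle inequality for $l^{line}$ survives the nonzero deflection $\gamma$, rather than assuming idealized tangency. By comparison, the monotonicity check reduces to a single one-variable derivative sign, and the value of $\beta_{max}(l_{min})$ together with the $4.66\,R_T$ constant can be quoted directly from \cite{uzpath} and \cite{scheuer1997continuous}, so those steps are routine.
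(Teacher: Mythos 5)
Your proposal follows essentially the same route as the paper: split the leg into the CC turn and the terminal line segment, bound the line-segment ratio by $\sqrt{(l^e+R_T)^2-R_M^2}/l^e$ via the tangent-length/triangle-inequality geometry and show it is decreasing in $l^e$ (the paper does this via KKT conditions, you via a direct derivative sign check, which is the same content), and bound the turn contribution by taking the larger of the $\beta_{max}(l_{min})/\kappa_{max}+\kappa_{max}/\sigma_{max}$ estimate and the $4.66\,R_T$ Fresnel-integral bound over the two regimes of \eqref{l_cc}. The argument is correct and matches the paper's proof in structure and in all essential steps.
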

\begin{proof}
The proof of the theorem is given in the Appendix.
\end{proof}

\section{Single-agent problem} \label{sec:singleglider}
The aim of this section is to solve the single-agent problem. This corresponds to finding a composite path which i) is valid and ii) maximizes the number of (allocated) interest points visited by the glider. This is achieved by choosing the best visitation order, since the composite path is uniquely determined by its corresponding visitation order.
The problem of finding the best visitation order is a constrained integer programming problem and we formulate it as a graph search. This is called the lower level graph and its denoted by $\Gamma_L$. It is dependent on the allocation (of waypoints) $\boldsymbol{\xi}_i$ and denoted by $\Gamma_L(\boldsymbol{\xi}_i)$. Each node in the graph represents a particular visitation order.

This section is organized in the following manner. First, the construction of $\Gamma_L$ and the notation associated with it is explained. Next, the optimality condition for single-agent problem along with a reformulation of this condition is restated in the current notation. After that the  new concepts of \emph{ideality} and \emph{weak validity} are introduced. These notions are used to obtain optimality guarantees in the next section. Finally, the algorithm to find the best visitation order, called lower level graph search, is presented. 
\subsection{Graph $\Gamma_L(\boldsymbol{\xi}_i)$}
Every node in the graph $\Gamma_L(\boldsymbol{\xi}_i)$ is a visitation order $\boldsymbol{\lambda}_i$. Each node $\boldsymbol{\lambda}_i$ has a composite path made up of legs $r_{i,j}(\boldsymbol{\lambda}_i)$ and a height profile $h_{i,j}(\boldsymbol{\lambda}_i)$ as described in Section \ref{sec:probstat}. The root (starting) node of the graph is denoted by $\boldsymbol{\lambda}^0_i$. The order associated with this node is empty $\boldsymbol{\lambda}^0_i = \{\}$, and hence it has no composite path. The graph search is terminated when a goal node is encountered. A node is considered a goal node if its last waypoint is the final position of the glider. The set of goal nodes is denoted by $\Phi^f_L$. Goal nodes do not have children.

The child of node $\boldsymbol{\lambda}_i$ is obtained, by appending a leg to the end of composite path $r_{i,j}(\boldsymbol{\lambda}_i)$. This is done by adding a thermal, an unvisited allocated interest point or the final position (of the glider) to the end of $\boldsymbol{\lambda}_i$. The set of children of $\boldsymbol{\lambda}_i$, $Children(\boldsymbol{\lambda}_i)$ is defined as,
\begin{multline} \label{eq5}
\boldsymbol{\lambda'}_i \in Children(\boldsymbol{\lambda}_i) \text{ only if } \boldsymbol{\lambda'}_i = \{\lambda_{i,1},.., \lambda_{i,n(\boldsymbol{\lambda}_i)}, \mu \} \\
\text{ s.t. } \mu \in \left\{\boldsymbol{\xi}_i \cup \{t_1,..,t_{n_t}\} \cup \{f\} \right\} \setminus \boldsymbol{\lambda}_i
\end{multline}
where $n()$ is the cardinality operator. If $\boldsymbol{\lambda'}_i \in Children(\boldsymbol{\lambda}_i)$ then $\boldsymbol{\lambda}_i = Parent(\boldsymbol{\lambda'}_i)$. 

Now we introduce some values associated with each $\boldsymbol{\lambda}_i$.
\subsubsection{Arc length of a node}
The arc length of the node, denoted by $S_L(\boldsymbol{\lambda}_i)$, is the total arc length of the composite path associated with $\boldsymbol{\lambda}_i$.
\begin{equation} \label{actcostlower}
	S_L(\boldsymbol{\lambda}_i) = \sum\limits_{j = 1}^{n(\boldsymbol{\lambda}_i)} l^f_{i,j} \,,
\end{equation}
where $l^f_{i,j}$ is the arclength of the $j$th leg of the glider's composite path.

\subsubsection{Cost of a node}
Another quantity associated with the a node $\boldsymbol{\lambda}_i$ is its cost $V_L$. The cost of a node determines the order of expansion of the node in the lower level graph search. Nodes with less cost are chosen first. It is defined as,
\begin{equation} \label{costoflambda}
V_L(\boldsymbol{\lambda}_i,\boldsymbol{\xi}_i) = \left\{
\begin{array}{ll}
S_L(\boldsymbol{\lambda}_i) & \text{ if } \boldsymbol{\lambda}_i \notin \Phi^f_L \\
S_L(\boldsymbol{\lambda}_i) + K_L(\boldsymbol{\lambda}_i)P_L & \text{ if } \boldsymbol{\lambda}_i \in \Phi^f_L \,,
\end{array}
\right.
\end{equation}
\begin{equation} \label{eq6}
	\text {where } P_L = \left( h^0_{i} +  \sum_{k = 1}^{n_t} {h_{t_k}} + 1 \right) / \tan(\gamma_{d_{min}}) \,.
\end{equation}

The symbol $\boldsymbol{\xi}_i$ is omitted from the expression $K_L(\boldsymbol{\lambda}_i)$ since it is clear from context. The $K_L(\boldsymbol{\lambda}_i)P_L$ term, is a penalty term which penalizes those goal nodes which have more unvisited allocated interest points. This makes them less likely to be expanded and makes the graph search optimal.

\subsubsection{Validity of a node}
We define a set of nodes called the set of \emph{valid} nodes, denoted by $\Phi^v_L$. A node is considered valid if its corresponding composite path satisfies the validity constraint \eqref{cons:h}. This set can be expressed as,
\begin{equation} \label{validcheck}
\Phi^v_L = \{\boldsymbol{\lambda}_i | S_L(\boldsymbol{\lambda}_i) < S^{max}_L(\boldsymbol{\lambda}_i) \hspace{0.1cm} \& \hspace{0.1cm}  Parent(\boldsymbol{\lambda}_i) \in \Phi^v_L \} \,.
\end{equation}

The quantity $S^{max}_L(\boldsymbol{\lambda}_i)$ is the upper bound on the arclength of composite path.
\begin{equation} \label{upperboundarclength}
S^{max}_L(\boldsymbol{\lambda}_i) = \left(h^0_{i} + \sum\limits_{l = 1}^{n_t} \mathbbm{1}_{\boldsymbol{\lambda}_i}(t_l)h_{t_l} \right)/\tan(\gamma_{d_{min}}) \,.
\end{equation}

\subsection{Optimality condition}
The optimality condition of the single-agent problem as expressed in equation \eqref{objfuncsingle} can be restated in the current notation as,
\begin{equation} \label{objfuncllorig}
\argmin_{\boldsymbol{\lambda}^*_i} S_L(\boldsymbol{\lambda}^*_i) \hspace{0.1cm} \text{s.t.} \hspace{0.1cm} \boldsymbol{\lambda}^*_i \in \argmin_{\boldsymbol{\lambda}_i}{K_L(\boldsymbol{\lambda}_i)} \cap \Phi^f_L \cap \Phi^v_L
\end{equation}

This conditions involves first finding the set of all valid goal nodes which have least number of unvisited interest points. Then we need to find the node with the least arclength from this set. Now consider another condition where we want to find a node with the least cost $V_L$ from the set of all valid goal nodes. This is expressed below,
\begin{equation} \label{objfuncllnew}
\argmin_{\boldsymbol{\lambda}^*_i} V_L(\boldsymbol{\lambda}^*_i) \hspace{0.1cm} \text{s.t.} \hspace{0.1cm} \boldsymbol{\lambda}^*_i \in \Phi^f_L \cap \Phi^v_L
\end{equation}

In theorem \eqref{lem4} we show that if a node satisfies condition \eqref{objfuncllnew} then it will also satisfy condition \eqref{objfuncllorig}. For this to be true, the penalty term $P_L$ has to be an upper bound on the distance the glider can travel. This can be expressed as the upper bound on height of the glider divided by $\tan(\gamma_{d_{min}})$. An upper bound on height of the glider is the starting height of the glider plus the sum of the heights of all thermals $h^0_{i} + \sum {h_{t_k}}$. This idea is akin to the idea of Lagrangian relaxation.

\begin{theorem} \label{lem4}
A node in graph $\Gamma_L$ that satisfies condition \eqref{objfuncllnew} will also satisfy condition \eqref{objfuncllorig}.
\end{theorem}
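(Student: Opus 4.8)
The plan is to read the two optimality conditions as two encodings of one and the same lexicographic preference. Condition \eqref{objfuncllorig} minimizes over the valid goal nodes $\Phi^f_L \cap \Phi^v_L$ lexicographically, first the count of unvisited interest points $K_L$ and then, among the ties, the arclength $S_L$. Condition \eqref{objfuncllnew} instead minimizes the single scalar cost $V_L = S_L + K_L P_L$ over the same feasible set. I would therefore prove the theorem by the standard weighted-penalty (Lagrangian-relaxation) argument: show that $P_L$ is chosen large enough that any unit reduction in $K_L$ strictly outweighs every admissible increase in $S_L$, so that scalar minimization of $V_L$ is forced to reproduce the lexicographic minimization exactly.

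The key preliminary fact I would establish is the strict bound $S_L(\boldsymbol{\lambda}_i) < P_L$ for every valid node $\boldsymbol{\lambda}_i \in \Phi^v_L$. By the validity characterization \eqref{validcheck}, a valid node satisfies $S_L(\boldsymbol{\lambda}_i) < S^{max}_L(\boldsymbol{\lambda}_i)$, and by \eqref{upperboundarclength} the indicator weights $\mathbbm{1}_{\boldsymbol{\lambda}_i}(t_l) \le 1$ with $h_{t_l} > 0$ give $S^{max}_L(\boldsymbol{\lambda}_i) \le (h^0_i + \sum_{l=1}^{n_t} h_{t_l})/\tan(\gamma_{d_{min}})$. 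Comparing with the definition of $P_L$ in \eqref{eq6}, the extra $+1$ in its numerator yields $S^{max}_L(\boldsymbol{\lambda}_i) < P_L$, and hence $S_L(\boldsymbol{\lambda}_i) < P_L$. This strict gap, manufactured precisely by the $+1$, together with the integrality of $K_L$, is the linchpin of the whole argument and the step I expect to be the main obstacle: the scalarization collapses to the lexicographic order only because a valid path can never accrue arclength as large as $P_L$, so a change in $K_L$ can never be compensated by arclength.

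With that bound in hand the main argument is short. Let $\hat{\boldsymbol{\lambda}}_i$ satisfy \eqref{objfuncllnew} and let $\bar{\boldsymbol{\lambda}}_i$ satisfy \eqref{objfuncllorig}; both lie in $\Phi^f_L \cap \Phi^v_L$. First I would show $K_L(\hat{\boldsymbol{\lambda}}_i) = K_L(\bar{\boldsymbol{\lambda}}_i)$. Suppose not; since $K_L$ is integer-valued and $\bar{\boldsymbol{\lambda}}_i$ attains the minimum $K_L$ over valid goal nodes, we would have $K_L(\hat{\boldsymbol{\lambda}}_i) \ge K_L(\bar{\boldsymbol{\lambda}}_i)+1$, and then using $S_L \ge 0$ and $S_L(\bar{\boldsymbol{\lambda}}_i) < P_L$,
\[
V_L(\hat{\boldsymbol{\lambda}}_i) \ge \big(K_L(\bar{\boldsymbol{\lambda}}_i)+1\big)P_L = K_L(\bar{\boldsymbol{\lambda}}_i)P_L + P_L > K_L(\bar{\boldsymbol{\lambda}}_i)P_L + S_L(\bar{\boldsymbol{\lambda}}_i) = V_L(\bar{\boldsymbol{\lambda}}_i),
\]
contradicting the minimality of $V_L$ at $\hat{\boldsymbol{\lambda}}_i$. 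Hence the counts coincide.

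Second, with $K_L(\hat{\boldsymbol{\lambda}}_i) = K_L(\bar{\boldsymbol{\lambda}}_i)$, optimality of $\hat{\boldsymbol{\lambda}}_i$ for \eqref{objfuncllnew} gives $V_L(\hat{\boldsymbol{\lambda}}_i) \le V_L(\bar{\boldsymbol{\lambda}}_i)$, and the equal penalty terms cancel to leave $S_L(\hat{\boldsymbol{\lambda}}_i) \le S_L(\bar{\boldsymbol{\lambda}}_i)$. Since $\hat{\boldsymbol{\lambda}}_i$ is a valid goal node attaining the minimum $K_L$, it belongs to the inner feasible set of \eqref{objfuncllorig} over which $\bar{\boldsymbol{\lambda}}_i$ minimizes $S_L$; therefore $S_L(\hat{\boldsymbol{\lambda}}_i) = S_L(\bar{\boldsymbol{\lambda}}_i)$. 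Thus $\hat{\boldsymbol{\lambda}}_i$ attains both the minimum $K_L$ and the minimum tie-breaking $S_L$, so it satisfies \eqref{objfuncllorig}, completing the proof.
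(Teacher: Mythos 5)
Your proposal is correct and follows essentially the same route as the paper's own proof: both hinge on the bound $S_L(\boldsymbol{\lambda}_i) < P_L$ for valid nodes (guaranteed by the $+1$ in the definition of $P_L$ in \eqref{eq6}) together with the integrality of $K_L$, proving first by contradiction that the $V_L$-minimizer attains the minimum $K_L$ and then cancelling the equal penalty terms to get minimality of $S_L$. Your write-up is if anything slightly more explicit about why the strict gap $S^{max}_L < P_L$ holds, but there is no substantive difference in the argument.
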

\begin{proof}
	
We prove this lemma in two parts. Let us assume that the node $\boldsymbol{\hat\lambda}_i$ satisfies condition \eqref{objfuncllnew}. 

First, we prove that $\boldsymbol{\hat\lambda}_i$ has the least unvisited interest points in the set $\Phi^f_l \cap \Phi^v_l$. This is a proof by contradiction. Suppose, $\exists \boldsymbol{\tilde\lambda}_i \in \Phi^f_L \cap \Phi^v_l$, such that $K_L(\boldsymbol{\tilde\lambda}_i)<K_L(\boldsymbol{\hat\lambda}_i)$.
\begin{equation*}
V_L(\boldsymbol{\hat\lambda}_i) \leq V_L(\boldsymbol{\tilde\lambda}_i) \,, \hspace{0.1cm} \text{(because of condition \eqref{objfuncllnew})}
\end{equation*}
\begin{equation*}
S_L(\boldsymbol{\hat\lambda}_i) + K_L(\boldsymbol{\hat\lambda}_i)P_L \leq S_L(\boldsymbol{\tilde\lambda}_i) + K_L(\boldsymbol{\tilde\lambda}_i)P_L \,,
\end{equation*}
\begin{equation*}
(K_L(\boldsymbol{\hat\lambda}_i) - K_L(\boldsymbol{\tilde\lambda}_i))P_L \leq S_L(\boldsymbol{\tilde\lambda}_i) - S_L(\boldsymbol{\hat\lambda}_i) \,,
\end{equation*}
\begin{equation*}
(K_L(\boldsymbol{\hat\lambda}_i) - K_L(\boldsymbol{\tilde\lambda}_i))P_L \leq \left(h^0_{i} + \sum^{n_t}_{i = 1} {h_{t_k}} \right) / \tan(\gamma_{d_{min}}) \,.
\end{equation*}
The last step was made possible by the fact that $\left(h^0_{i} + \sum^{n_t}_{i = 1} {h_{t_k}} \right) / \tan(\gamma_{d_{min}})$ is an upper bound on $S_L(\boldsymbol{\lambda}_i)$ for any valid node $\boldsymbol{\lambda}_i$. Using the definition of $P_L$ and the fact that $K_L(\boldsymbol{\hat\lambda}_i) - K_L(\boldsymbol{\tilde\lambda}_i) \geq 1$, we arrive at a contradiction.
	
Secondly, we prove that $\boldsymbol{\hat\lambda}_i$ has the least arclength of all nodes with similar $K_L$ in the set $\Phi^f_l \cap \Phi^v_l$. Assume another node $\boldsymbol{\dot\lambda}_i \in \Phi^f_L \cap \Phi^v_l$, such that $K_L(\boldsymbol{\dot\lambda}_i) = K_L(\boldsymbol{\hat\lambda}_i)$.
\begin{equation*}
V_L(\boldsymbol{\hat\lambda}_i) \leq V_L(\boldsymbol{\dot\lambda}_i) \,,
\end{equation*}
\begin{equation*}
S_L(\boldsymbol{\hat\lambda}_i) + K_L(\boldsymbol{\hat\lambda}_i)P_L \leq S_L(\boldsymbol{\dot\lambda}_i) + K_L(\boldsymbol{\dot\lambda}_i)P_L \,,
\end{equation*}
\begin{equation*}
S_L(\boldsymbol{\hat\lambda}_i) \leq S_L(\boldsymbol{\dot\lambda}_i) \,.
\end{equation*}
Hence $\boldsymbol{\hat\lambda}_i$ has the minimum arclength for all nodes in $\Phi^f_l \cap \Phi^v_l$ with number of unvisited interest points equal to $K_L(\boldsymbol{\hat\lambda}_i)$.
\end{proof}
The reason for using the second condition is that finding a node that satisfies the second condition is computationally less expensive. Condition \eqref{objfuncllorig} involves taking a minimum over a set two times whereas condition \eqref{objfuncllnew} requires it once. Moreover, the second condition can be satisfied by running a uniform cost search over the graph $\Gamma_L$. This procedure is called the lower level graph search and is explained in section \ref{lowerlevelgraphsearch}. Before delving into the lower level graph search we introduce two new concepts of \emph{ideality} and \emph{weakness} which will be used in the following sections.

\subsection{Ideality and weakness}
First we define the concept of an \emph{ideal path} of a glider for a given waypoint visitation order. Like a composite path, this path satisfies the boundary conditions \eqref{boundarycond}-\eqref{contposorientcurv} and feasibility conditions \eqref{cons:kappa_max}-\eqref{cons:sigma_max}. But it has the least arclength over all paths that satisfy these condition. The ideal path associated with the node $\boldsymbol{\lambda}_i$ is denoted by $r^*_{i,j}(\boldsymbol{\lambda}_i )$. Such a path is guaranteed to exist due to Filippov's Existence Theorem \cite{liberzon2012calculus}.

Similar to the concept of arclength of a node we also have the \emph{ideal arclength} of a node. This is the total arclength of the ideal path $r^*_{i,j}$ and it is denoted by $S^*_L(\boldsymbol{\lambda}_i)$. Likewise each node also has an \emph{ideal cost} $V^*_L(\boldsymbol{\lambda}_i)$. This is given as,
\begin{equation}
V^*_L(\boldsymbol{\lambda}_i,\boldsymbol{\xi}_i) = \left\{
\begin{array}{ll}
S^*_L(\boldsymbol{\lambda}_i) & \text{if } \boldsymbol{\lambda}_i \notin \Phi^f_L \\
S^*_L(\boldsymbol{\lambda}_i) + K_L(\boldsymbol{\lambda}_i,\boldsymbol{\xi}_i)P_L & \text{if } \boldsymbol{\lambda}_i \in \Phi^f_L \,,
\end{array}
\right.
\end{equation}

When clear from context the $\boldsymbol{\xi}_i$ is omitted for simplicity. Similarly we have the concept of \emph{ideal validity}. A node $\boldsymbol{\lambda}_i$ is considered \emph{ideally} valid if its ideal path is valid. The set of ideally valid nodes is denoted by $\Phi^{iv}_L$. This can be rigorously be defined as,
\begin{equation} \label{idealvalidcheck}
\Phi^{iv}_L = \{\boldsymbol{\lambda}_i | S^*_L(\boldsymbol{\lambda}_i) < S^{max}_L(\boldsymbol{\lambda}_i) \hspace{0.1cm} \& \hspace{0.1cm} Parent(\boldsymbol{\lambda}_i) \in \Phi^{iv}_L \} \,.
\end{equation}

Finally, we introduce the concept of \emph{ideal optimality}. This is similar to the concept of optimality as shown in conditions \eqref{objfuncllorig} and \eqref{objfuncllnew}. A node is ideally optimal if it satisfies the following condition.
\begin{equation} \label{objfuncllideal}
\argmin_{\boldsymbol{\lambda}^*_i} V^*_L(\boldsymbol{\lambda}^*_i) \hspace{0.1cm} \text{s.t.} \hspace{0.1cm} \boldsymbol{\lambda}^*_i \in \Phi^f_L \cap \Phi^{iv}_L
\end{equation}
This means that a node is ideally optimal if it minimizes ideal cost over all ideally valid goal nodes. Similar to the regular optimality this also means that its ideal path visits the most interest points and has the least arclength. This is formalized in the following lemma.
\begin{lemma} \label{lem4_ideal}
If a node satisfies ideal optimality condition \eqref{objfuncllideal} then it will also satisfy the following condition,
\begin{equation}
\argmin_{\boldsymbol{\lambda}^*_i} S^*_L(\boldsymbol{\lambda}^*_i) \hspace{0.1cm} \text{s.t.} \hspace{0.1cm} \boldsymbol{\lambda}^*_i \in \{\boldsymbol{\lambda}_i | \argmin{K_L(\boldsymbol{\lambda}_i)}\} \cap \Phi^f_L \cap \Phi^v_L
\end{equation}
\end{lemma}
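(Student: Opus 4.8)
The plan is to recognize that condition \eqref{objfuncllideal} is the exact \emph{ideal} counterpart of condition \eqref{objfuncllnew}, obtained by replacing every regular quantity with its ideal version: the cost $V_L$ by the ideal cost $V^*_L$, the arclength $S_L$ by the ideal arclength $S^*_L$, and the valid set $\Phi^v_L$ by the ideally valid set $\Phi^{iv}_L$. Accordingly I read the $\Phi^v_L$ appearing in the stated conclusion as $\Phi^{iv}_L$, so that arclengths and validity are both measured consistently with the ideal path. With this reading the proof mirrors that of Theorem \ref{lem4} almost line for line, and I would carry it out in the same two parts, letting $\boldsymbol{\hat\lambda}_i$ denote a node satisfying \eqref{objfuncllideal}.

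First I would show by contradiction that $\boldsymbol{\hat\lambda}_i$ has the fewest unvisited interest points among nodes in $\Phi^f_L \cap \Phi^{iv}_L$. Assuming some $\boldsymbol{\tilde\lambda}_i \in \Phi^f_L \cap \Phi^{iv}_L$ with $K_L(\boldsymbol{\tilde\lambda}_i) < K_L(\boldsymbol{\hat\lambda}_i)$, ideal optimality gives $V^*_L(\boldsymbol{\hat\lambda}_i) \leq V^*_L(\boldsymbol{\tilde\lambda}_i)$; expanding both sides with the definition of $V^*_L$ on goal nodes and rearranging yields $(K_L(\boldsymbol{\hat\lambda}_i) - K_L(\boldsymbol{\tilde\lambda}_i)) P_L \leq S^*_L(\boldsymbol{\tilde\lambda}_i) - S^*_L(\boldsymbol{\hat\lambda}_i)$. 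Bounding the right-hand side above by $(h^0_{i} + \sum_k h_{t_k})/\tan(\gamma_{d_{min}})$ and using $K_L(\boldsymbol{\hat\lambda}_i) - K_L(\boldsymbol{\tilde\lambda}_i) \geq 1$ together with the definition \eqref{eq6} of $P_L$ produces the same contradiction as in Theorem \ref{lem4}.

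Part two is then immediate: for any $\boldsymbol{\dot\lambda}_i \in \Phi^f_L \cap \Phi^{iv}_L$ with $K_L(\boldsymbol{\dot\lambda}_i) = K_L(\boldsymbol{\hat\lambda}_i)$, the inequality $V^*_L(\boldsymbol{\hat\lambda}_i) \leq V^*_L(\boldsymbol{\dot\lambda}_i)$ carries equal penalty terms $K_L P_L$ on both sides, so these cancel and leave $S^*_L(\boldsymbol{\hat\lambda}_i) \leq S^*_L(\boldsymbol{\dot\lambda}_i)$. Hence $\boldsymbol{\hat\lambda}_i$ attains the minimum ideal arclength among all nodes sharing its value of $K_L$, which together with part one is exactly the stated conclusion.

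The only step where the word \emph{ideal} does real work — and thus the one I would verify most carefully — is the upper bound on $S^*_L$ used in part one. In Theorem \ref{lem4} that bound came from ordinary validity; here I would instead invoke the definition of ideal validity \eqref{idealvalidcheck}, which gives $S^*_L(\boldsymbol{\lambda}_i) < S^{max}_L(\boldsymbol{\lambda}_i)$ for every $\boldsymbol{\lambda}_i \in \Phi^{iv}_L$, and then note that \eqref{upperboundarclength} bounds $S^{max}_L$ by $(h^0_{i} + \sum_k h_{t_k})/\tan(\gamma_{d_{min}})$ since each indicator $\mathbbm{1}_{\boldsymbol{\lambda}_i}(t_l) \leq 1$. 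Because this is the identical numerical bound that drives the contradiction in Theorem \ref{lem4}, no new estimate is required and the remainder of the argument transfers verbatim; I anticipate no genuine obstacle beyond keeping the ideal quantities straight.
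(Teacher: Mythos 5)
Your proposal is correct and matches the paper's own (one-line) proof, which simply instructs the reader to repeat the argument of Theorem \ref{lem4} with $S_L$, $V_L$ and $\Phi^v_L$ replaced by $S^*_L$, $V^*_L$ and $\Phi^{iv}_L$; you carry out exactly that substitution, and your reading of the $\Phi^v_L$ in the stated conclusion as a typo for $\Phi^{iv}_L$ is consistent with the paper's own replacement instruction. Your extra care in rederiving the upper bound on $S^*_L$ from the ideal-validity definition \eqref{idealvalidcheck} is the one point the paper leaves implicit, and you handle it correctly.
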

\begin{proof}
The proof for this lemma is similar to the proof for theorem \ref{lem4}. It can be easily obtained by replacing $S_L$, $V_L$ and $\Phi^v_L$ with $S^*_L$, $V^*_L$ and $\Phi^{iv}_L$ respectively.
\end{proof}

Notice that the ideal path, arclength, cost, validity and optimality of a node is unknown.

Now we move on to determinable notions. The first one is \emph{weak cost} of a node. This is denoted by $\underbar{V}_L(\boldsymbol{\lambda}_i)$ and defined as,
\begin{equation} \label{eq:defweakcost}
\underbar{V}_L(\boldsymbol{\lambda}_i,\boldsymbol{\xi}_i) = \left\{
\begin{array}{ll}
S_L(\boldsymbol{\lambda}_i)/\mathcal{R}_{max} & \text{if } \boldsymbol{\lambda}_i \notin \Phi^f_L \\
S_L(\boldsymbol{\lambda}_i)/\mathcal{R}_{max} + K_L(\boldsymbol{\lambda}_i)P_L & \text{if } \boldsymbol{\lambda}_i \in \Phi^f_L \,,
\end{array}
\right.
\end{equation}

When clear from context the $\boldsymbol{\xi}_i$ is omitted for simplicity. Another notion related to validity is called \emph{weak} validity. The set of weakly valid nodes is denoted by $\phi^{wv}_L$. This set is defined in the following way,
\begin{equation} \label{weakvalidcheck}
\Phi^{wv}_L = \{\phi | \frac{S_L(\boldsymbol{\lambda}_i)}{\mathcal{R}_{max}}  < S^{max}_L(\boldsymbol{\lambda}_i) \hspace{0.1cm} \& \hspace{0.1cm} Parent(\boldsymbol{\lambda}_i) \in \Phi^{wv}_L \} \,.
\end{equation}
Finally, we introduce the concept of \emph{weak optimality}. A node is weakly optimal if it has the least weak cost in all the weakly valid goal nodes. This expressed as follows.
\begin{equation} \label{objfuncllweak}
\boldsymbol{\underbar{$\lambda$}}_i = \argmin_{\boldsymbol{\lambda}^*_i} \underbar{V}_L (\boldsymbol{\lambda}^*_i) \hspace{0.1cm} \text{s.t.} \hspace{0.1cm} \boldsymbol{\lambda}^*_i \in \Phi^f_L \cap \Phi^{wv}_L \,.
\end{equation}
Notice that weakly valid and optimal nodes are determinable as opposed to the ideal path, arclength, cost, validity and optimality.

We end this subsection with some results which relate these new notions to the earlier concepts. This will be helpful in proving the optimality of the Branch\&Bound algorithm. Lemma \ref{lem:validsets} states that if a node is valid it will also be ideally valid, and if a node is ideally valid it will also be weakly valid.
\begin{lemma} \label{lem:validsets}
$\Phi^v_L \subset \Phi^{iv}_L \subset \Phi^{wv}_L$.
\end{lemma}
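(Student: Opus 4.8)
The plan is to prove the two inclusions $\Phi^v_L \subseteq \Phi^{iv}_L$ and $\Phi^{iv}_L \subseteq \Phi^{wv}_L$ separately, and in each case to proceed by induction on the depth of a node in $\Gamma_L$ (i.e.\ on $n(\boldsymbol{\lambda}_i)$), since all three validity sets are defined recursively through the same parent condition. The three sets differ only in the left-hand side of the arclength test against the common upper bound $S^{max}_L(\boldsymbol{\lambda}_i)$, which by \eqref{upperboundarclength} depends only on the thermals appearing in $\boldsymbol{\lambda}_i$ and is therefore identical across the three notions. Hence each inclusion reduces to a single pointwise inequality between the relevant arclength quantities, propagated up the tree by the induction hypothesis on the parent.

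First I would establish the two driving inequalities
\begin{equation*}
S^*_L(\boldsymbol{\lambda}_i) \le S_L(\boldsymbol{\lambda}_i) \quad\text{and}\quad \frac{S_L(\boldsymbol{\lambda}_i)}{\mathcal{R}_{max}} \le S^*_L(\boldsymbol{\lambda}_i) \,.
\end{equation*}
The first is immediate: the composite path associated with $\boldsymbol{\lambda}_i$ satisfies the boundary conditions \eqref{boundarycond}--\eqref{contposorientcurv} and the feasibility constraints \eqref{cons:kappa_max}--\eqref{cons:sigma_max}, so it is one of the admissible paths over which the ideal path minimizes arclength; thus $S^*_L \le S_L$. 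For the second inequality I would work leg by leg. The composite path's $j$th leg joins $p_{\lambda_{i,j-1}}$ to $p_{\lambda_{i,j}}$, and Lemma \ref{thm:epsilon1} (valid under Assumptions \ref{asm:elementary} and \ref{asm:l_min>2R_T}) gives $l^f_{i,j} \le \mathcal{R}_{max}\, l^e(p_{\lambda_{i,j}},p_{\lambda_{i,j-1}})$. Summing over $j$ yields $S_L(\boldsymbol{\lambda}_i) \le \mathcal{R}_{max}\sum_j l^e(p_{\lambda_{i,j}},p_{\lambda_{i,j-1}})$. On the other hand the ideal path passes through the same ordered sequence of waypoints, so the portion of it between two consecutive waypoints has length at least the corresponding Euclidean distance; summing gives $\sum_j l^e(p_{\lambda_{i,j}},p_{\lambda_{i,j-1}}) \le S^*_L(\boldsymbol{\lambda}_i)$. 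Combining the two yields $S_L/\mathcal{R}_{max} \le S^*_L$.

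With these inequalities in hand the inductive step is routine. For $\Phi^v_L \subseteq \Phi^{iv}_L$, take $\boldsymbol{\lambda}_i \in \Phi^v_L$; then $S^*_L(\boldsymbol{\lambda}_i) \le S_L(\boldsymbol{\lambda}_i) < S^{max}_L(\boldsymbol{\lambda}_i)$, so the ideal-validity arclength test \eqref{idealvalidcheck} holds, while $Parent(\boldsymbol{\lambda}_i)\in\Phi^v_L \subseteq \Phi^{iv}_L$ by the induction hypothesis, giving $\boldsymbol{\lambda}_i\in\Phi^{iv}_L$. For $\Phi^{iv}_L \subseteq \Phi^{wv}_L$, take $\boldsymbol{\lambda}_i\in\Phi^{iv}_L$; then $S_L(\boldsymbol{\lambda}_i)/\mathcal{R}_{max} \le S^*_L(\boldsymbol{\lambda}_i) < S^{max}_L(\boldsymbol{\lambda}_i)$ and $Parent(\boldsymbol{\lambda}_i)\in\Phi^{wv}_L$ by induction, giving $\boldsymbol{\lambda}_i\in\Phi^{wv}_L$ per \eqref{weakvalidcheck}. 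The base case is the root $\boldsymbol{\lambda}^0_i$, which lies vacuously in all three sets.

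The main obstacle is the second inequality $S_L/\mathcal{R}_{max}\le S^*_L$, and specifically the lower bound $S^*_L \ge \sum_j l^e(\cdot,\cdot)$: one must argue cleanly that the ideal path, being defined over the whole visitation order rather than constructed leg by leg, nonetheless visits the waypoints in order, so that its length bounds below the sum of inter-waypoint Euclidean distances. The matching per-leg upper bound from Lemma \ref{thm:epsilon1} requires both assumptions to hold on every leg, which is guaranteed by Assumption \ref{asm:l_min>2R_T} on $l_{min}$. I would make explicit that the \emph{same} ordered waypoint pairs --- hence the same Euclidean distances $l^e$ --- appear in both the composite-path upper bound and the ideal-path lower bound, which is precisely what lets the two sums cancel up to the factor $\mathcal{R}_{max}$. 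Finally, I would note that the stated symbol $\subset$ is used here in the sense of set containment; the argument establishes $\subseteq$, which is all that the subsequent optimality proofs require.
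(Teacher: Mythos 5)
Your proposal is correct and follows essentially the same route as the paper: both rest on the chain $S_L(\boldsymbol{\lambda}_i)/\mathcal{R}_{max} \le \sum_j l^e_{i,j} \le S^*_L(\boldsymbol{\lambda}_i) \le S_L(\boldsymbol{\lambda}_i)$, obtained by combining the per-leg bound of Lemma \ref{thm:epsilon1} with the observation that the composite path is admissible for the ideal-path minimization while the ideal path is bounded below by the inter-waypoint Euclidean distances. Your explicit induction on node depth to handle the recursive $Parent$ condition, and the remark that the argument yields $\subseteq$ rather than strict containment, merely make precise what the paper dismisses with ``from here it is easy to see.''
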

\begin{proof}
As shown in lemma \ref{thm:epsilon1}, any leg in a composite path constructed using the procedure outlined in section \ref{sec:point2point} will have an arclength $l^f_{i,j} \leq \mathcal{R}_{max} l^e_{i,j}$, where $l^e_{i,j}$ is the euclidean distance between the start and end positions of the leg. Moreover, its also quite obvious that $l^e_{i,j} \leq l^f_{i,j}$. Hence, we get the inequality,
\begin{equation*}
\frac{l^f_{i,j}}{\mathcal{R}_{max}} \leq l^e_{i,j} \leq l^f_{i,j} \leq \mathcal{R}_{max} l^e_{i,j} \,,
\end{equation*}
If we sum this over all the legs of the composite path we get the expression,
\begin{equation*}
\frac{S_L(\boldsymbol{\lambda}_i)}{\mathcal{R}_{max}} \leq \sum_{\forall j} l^e_{i,j} \leq S_L(\boldsymbol{\lambda}_i) \leq \mathcal{R}_{max} \sum_{\forall j}l^e_{i,j} \,,
\end{equation*}
Now we know that $S^*_L(\boldsymbol{\lambda}_i) \leq S_L(\boldsymbol{\lambda}_i)$ by definition and $\sum_{\forall j} l^e_{i,j} \leq S^*_L(\boldsymbol{\lambda}_i)$. Using this we get,
\begin{equation*}
\frac{S_L(\boldsymbol{\lambda}_i)}{\mathcal{R}_{max}} \leq \sum_{\forall j} l^e_{i,j} \leq S^*_L(\boldsymbol{\lambda}_i)  \leq S_L(\boldsymbol{\lambda}_i) \leq \mathcal{R}_{max} \sum_{\forall j}l^e_{i,j} \,,
\end{equation*}
This can be shortened to,
\begin{equation} \label{SL/R<S*L<SL}
\frac{S_L(\boldsymbol{\lambda}_i)}{\mathcal{R}_{max}} \leq S^*_L(\boldsymbol{\lambda}_i)  \leq S_L(\boldsymbol{\lambda}_i) \,,
\end{equation}

From here it is easy to see that if a node satisfies validity it will definitely satisfy ideal validity and if it satisfies ideal validity it will definitely satisfy weak validity. Hence, $\Phi^v_L \subset \Phi^{iv}_L \subset \Phi^{wv}_L$.
\end{proof}

Now we introduce some lemmas which relate cost of a node with its ideal and weak costs.

\begin{lemma} \label{lem:Vund<V*}
If the node $\boldsymbol{\underbar{$\lambda$}}_i$ satisfies weak optimality condition \eqref{objfuncllweak} and node $\boldsymbol{\lambda}^*_i$ satisfies ideal optimality condition \eqref{objfuncllideal}, then the weak cost of $\boldsymbol{\underbar{$\lambda$}}_i$ will be a lower bound on the ideal cost of $\boldsymbol{\lambda}^*_i$,
\begin{equation}
\underbar{V}_L(\boldsymbol{\underbar{$\lambda$}}_i) \leq V^*_L(\boldsymbol{\lambda}^*_i)
\end{equation}
\end{lemma}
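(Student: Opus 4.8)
The plan is to exploit the nesting of the validity sets established in Lemma \ref{lem:validsets} together with the left inequality of \eqref{SL/R<S*L<SL}. The key observation is that the ideally optimal node $\boldsymbol{\lambda}^*_i$ is itself a feasible candidate for the weak optimization problem \eqref{objfuncllweak}; this lets me invoke the weak optimality of $\boldsymbol{\underbar{$\lambda$}}_i$ against $\boldsymbol{\lambda}^*_i$ directly, and then bound the weak cost of $\boldsymbol{\lambda}^*_i$ by its ideal cost.

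First I would note that since $\boldsymbol{\lambda}^*_i$ satisfies the ideal optimality condition \eqref{objfuncllideal}, it lies in $\Phi^f_L \cap \Phi^{iv}_L$. By Lemma \ref{lem:validsets} we have $\Phi^{iv}_L \subset \Phi^{wv}_L$, and hence $\boldsymbol{\lambda}^*_i \in \Phi^f_L \cap \Phi^{wv}_L$. Because $\boldsymbol{\underbar{$\lambda$}}_i$ minimizes the weak cost over precisely this set, I immediately obtain $\underbar{V}_L(\boldsymbol{\underbar{$\lambda$}}_i) \leq \underbar{V}_L(\boldsymbol{\lambda}^*_i)$.

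Next I would compare the weak and ideal costs of the single node $\boldsymbol{\lambda}^*_i$. Since $\boldsymbol{\lambda}^*_i$ is a goal node, both $\underbar{V}_L$ and $V^*_L$ take the goal-node branch of their respective definitions, sharing the identical penalty term $K_L(\boldsymbol{\lambda}^*_i)P_L$. The comparison therefore reduces to comparing $S_L(\boldsymbol{\lambda}^*_i)/\mathcal{R}_{max}$ against $S^*_L(\boldsymbol{\lambda}^*_i)$. The left inequality of \eqref{SL/R<S*L<SL} gives exactly $S_L(\boldsymbol{\lambda}^*_i)/\mathcal{R}_{max} \leq S^*_L(\boldsymbol{\lambda}^*_i)$, so $\underbar{V}_L(\boldsymbol{\lambda}^*_i) \leq V^*_L(\boldsymbol{\lambda}^*_i)$. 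Chaining the two inequalities yields $\underbar{V}_L(\boldsymbol{\underbar{$\lambda$}}_i) \leq \underbar{V}_L(\boldsymbol{\lambda}^*_i) \leq V^*_L(\boldsymbol{\lambda}^*_i)$, which is the claim.

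I do not anticipate any substantial obstacle, as the argument is essentially bookkeeping on top of two earlier results. The only point requiring care is confirming that $\boldsymbol{\lambda}^*_i$ genuinely belongs to the feasible set of the weak problem: this is precisely where the inclusion $\Phi^{iv}_L \subset \Phi^{wv}_L$ does the real work, since without it the minimization defining $\boldsymbol{\underbar{$\lambda$}}_i$ could not legitimately be applied to $\boldsymbol{\lambda}^*_i$. It is also worth checking that both nodes are indeed goal nodes so that the goal-node branches of the cost definitions apply, which holds because each is drawn from a feasible set of the form $\Phi^f_L \cap (\cdot)$.
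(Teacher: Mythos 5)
Your proof is correct and follows essentially the same route as the paper's: first using $\Phi^{iv}_L \subset \Phi^{wv}_L$ from Lemma \ref{lem:validsets} to place $\boldsymbol{\lambda}^*_i$ in the feasible set of the weak problem and invoke weak optimality, then applying the pointwise bound $\underbar{V}_L \leq V^*_L$ from \eqref{SL/R<S*L<SL}. Your added check that both nodes are goal nodes so the penalty terms cancel is a small but welcome bit of extra care over the paper's version.
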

\begin{proof}
As $\boldsymbol{\underbar{$\lambda$}}_i$ minimizes $\underbar{V}_L $ over all nodes in the set $\Phi^f_L \cap \Phi^{wv}_L$,
\begin{equation*}
\underbar{V}_L(\boldsymbol{\underbar{$\lambda$}}_i) \leq \underbar{V}_L(\boldsymbol{\lambda}^*_i)
\end{equation*}
since $\boldsymbol{\lambda}^*_i \in \Phi^f_L \cap \Phi^{iv}_L$ and $\Phi^f_L \cap \Phi^{iv}_L$ is a subset of $\Phi^f_L \cap \Phi^{wv}_L$.

Equation \eqref{SL/R<S*L<SL} says $\frac{S_L(\boldsymbol{\lambda}_i)}{\mathcal{R}_{max}} < S^*_L(\boldsymbol{\lambda}_i) < S_L(\boldsymbol{\lambda}_i), \forall \phi$ hence $\underbar{V}_L(\boldsymbol{\lambda}_i) < V^*_L(\phi) < V_L(\boldsymbol{\lambda}_i)$ for any node. This leads us to,
\begin{equation*}
\underbar{V}_L(\boldsymbol{\underbar{$\lambda$}}_i) \leq \underbar{V}_L(\boldsymbol{\lambda}^*_i) \leq V^*_L(\boldsymbol{\lambda}^*_i)
\end{equation*}
\end{proof}

\begin{lemma} \label{lem:V*<V}
If the node $\boldsymbol{\lambda'}_i$ satisfies optimality condition \eqref{objfuncllnew} and node $\boldsymbol{\lambda}^*_i$ satisfies ideal optimality condition \eqref{objfuncllideal}, then the ideal cost of $\boldsymbol{\lambda}^*_i$ will be a lower bound for the cost of $\boldsymbol{\lambda'}_i$, namely,
\begin{equation}
V^*_L(\boldsymbol{\lambda}^*_i) \leq V_L (\boldsymbol{\lambda'}_i)
\end{equation}
\end{lemma}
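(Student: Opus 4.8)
The plan is to mirror the structure of the proof of Lemma~\ref{lem:Vund<V*}: I chain one inequality coming from an optimality (argmin) property with one pointwise inequality coming from equation~\eqref{SL/R<S*L<SL}. Concretely, I insert the intermediate quantity $V^*_L(\boldsymbol{\lambda'}_i)$ and establish the two-link chain $V^*_L(\boldsymbol{\lambda}^*_i) \leq V^*_L(\boldsymbol{\lambda'}_i) \leq V_L(\boldsymbol{\lambda'}_i)$, whose endpoints are exactly the claimed inequality.

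First I would establish the left link using the ideal optimality of $\boldsymbol{\lambda}^*_i$. By hypothesis $\boldsymbol{\lambda'}_i$ satisfies condition~\eqref{objfuncllnew}, so in particular $\boldsymbol{\lambda'}_i \in \Phi^f_L \cap \Phi^v_L$. Lemma~\ref{lem:validsets} gives $\Phi^v_L \subset \Phi^{iv}_L$, hence $\boldsymbol{\lambda'}_i \in \Phi^f_L \cap \Phi^{iv}_L$. Since $\boldsymbol{\lambda}^*_i$ minimizes $V^*_L$ over precisely this set by condition~\eqref{objfuncllideal}, and $\boldsymbol{\lambda'}_i$ is a member of it, it follows at once that $V^*_L(\boldsymbol{\lambda}^*_i) \leq V^*_L(\boldsymbol{\lambda'}_i)$.

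Next I would establish the right link by a pointwise comparison of $V^*_L$ and $V_L$ at the single node $\boldsymbol{\lambda'}_i$. Both costs are evaluated at a goal node, since $\boldsymbol{\lambda'}_i \in \Phi^f_L$, so each one equals an arclength term plus the common penalty $K_L(\boldsymbol{\lambda'}_i)P_L$; the only difference is that $V^*_L$ uses $S^*_L$ whereas $V_L$ uses $S_L$. Equation~\eqref{SL/R<S*L<SL} supplies $S^*_L(\boldsymbol{\lambda'}_i) \leq S_L(\boldsymbol{\lambda'}_i)$, and adding the identical penalty term to both sides yields $V^*_L(\boldsymbol{\lambda'}_i) \leq V_L(\boldsymbol{\lambda'}_i)$. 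Concatenating the two links gives $V^*_L(\boldsymbol{\lambda}^*_i) \leq V_L(\boldsymbol{\lambda'}_i)$, as required.

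I do not anticipate a genuine obstacle, since both ingredients are already available; the only point requiring care is directional bookkeeping. The set inclusion must be invoked so that the regularly optimal node $\boldsymbol{\lambda'}_i$ becomes a feasible competitor in the \emph{ideal} minimization (making $V^*_L(\boldsymbol{\lambda}^*_i)$ the lower bound), and not in the reverse sense; likewise the pointwise step must be applied at $\boldsymbol{\lambda'}_i$ rather than at $\boldsymbol{\lambda}^*_i$. Confirming that both nodes lie in $\Phi^f_L$, so that the penalty contributions match exactly and can be cancelled in the pointwise comparison, is the one detail I would verify explicitly.
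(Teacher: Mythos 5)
Your proof is correct and follows essentially the same route as the paper's: both insert the intermediate quantity $V^*_L(\boldsymbol{\lambda'}_i)$, use the inclusion of valid goal nodes among ideally valid goal nodes to make $\boldsymbol{\lambda'}_i$ a competitor in the ideal minimization, and then apply $S^*_L \leq S_L$ from equation~\eqref{SL/R<S*L<SL} pointwise. Your explicit check that both costs carry the same penalty term at a goal node is a small extra care the paper glosses over, but the argument is identical.
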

\begin{proof}
From the ideal optimality condition we know that. 
\begin{equation*}
V^*_L (\boldsymbol{\lambda}^*_i) \leq V^*_L (\boldsymbol{\lambda}_i), \hspace{0.1cm} \forall \boldsymbol{\lambda}_i \in \Phi^{iv}_L \cap \Phi^f_L
\end{equation*}

We also know that $\boldsymbol{\lambda'}_i$ is a valid goal node in $\Gamma_L (\boldsymbol{\xi}^{\phi'_u}_l )$ and all valid goal nodes are also ideally valid goal nodes. Hence,
\begin{equation*}
V^*_L (\boldsymbol{\lambda}^*_i) \leq V^*_L (\boldsymbol{\lambda'}_i)
\end{equation*}

From equation \eqref{SL/R<S*L<SL} we know that $V^*_L(\phi) \leq V_L(\phi)$ for any node, hence,
\begin{equation*}
V^*_L (\boldsymbol{\lambda}^*_i) \leq V^*_L (\boldsymbol{\lambda'}_i) \leq V_L (\boldsymbol{\lambda'}_i)
\end{equation*}
\end{proof}
\subsection{Lower level graph search over $\Gamma_L$} \label{lowerlevelgraphsearch}
\begin{algorithm}[h]
	\caption {Lower Level Graph Search}
	\label{alg2}
	\begin{algorithmic}[1]
		\REQUIRE $i,p^0_{i},h^0_{i},p^f_{i},\boldsymbol{\xi}_i $
		\STATE $\Omega = \{\boldsymbol{\lambda}^0_i\}, \hspace{0.2cm} \boldsymbol{\lambda}_i := \boldsymbol{\lambda}^0_i$
		\WHILE {$\boldsymbol{\lambda}_i \notin \Phi^f_L$}
		\STATE $\textrm{remove } \boldsymbol{\lambda}_i \textrm{ from } \Omega \textrm{ with the smallest } V_L(\boldsymbol{\lambda}_i)$
		\FORALL {$\boldsymbol{\lambda'}_i \in Children(\boldsymbol{\lambda}_i)$}
		\STATE calculate $S_L(\boldsymbol{\lambda'}_i)$, $V_L(\boldsymbol{\lambda'}_i)$ and $\underbar{V}_L(\boldsymbol{\lambda'}_i)$
		\IF {$S_L(\boldsymbol{\lambda'}_i)$ satisfies validity \eqref{validcheck}}
		\STATE Insert $\boldsymbol{\lambda'}_i$ into $\Omega$ with cost $V_L(\boldsymbol{\lambda'}_i)$
		\ELSIF {$S_L(\boldsymbol{\lambda'}_i)$ satisfies weak validity \eqref{weakvalidcheck}}
		\STATE Insert $\boldsymbol{\lambda'}_i$ into $\underline\Omega $ with cost $\underbar{V}_L(\boldsymbol{\lambda'}_i)$
		\ENDIF
		\ENDFOR
		\ENDWHILE
		\STATE $\boldsymbol{\hat\lambda}_i := \boldsymbol{\lambda}_i$
		\STATE \underbar{$\boldsymbol{\lambda}_i$} $:=$ node with smallest $\underbar{V}_L$ in $\underline\Omega$
		\WHILE {\underbar{$\boldsymbol{\lambda}_i$} $\notin \Phi^f_L$}
		\STATE remove \underbar{$\boldsymbol{\lambda}_i$} from $\underline\Omega$ with the smallest $\underbar{V}_L(\underbar{$\boldsymbol{\lambda}_i$})$
		\FORALL {\underbar{$\boldsymbol{\lambda}_i$}$' \in Children(\underbar{$\boldsymbol{\lambda}_i$})$ that do not exist}
		\STATE calculate $S_L(\underbar{$\boldsymbol{\lambda}_i$}')$ and $\underbar{V}_L(\underbar{$\boldsymbol{\lambda}_i$}')$
		\IF {$S_L(\boldsymbol{\lambda'}_i)$ satisfies weak validity \eqref{weakvalidcheck}}
		\STATE Insert $\underbar{$\boldsymbol{\lambda}_i$}'$ into $\underline\Omega $ with cost $\underbar{V}_L(\underbar{$\boldsymbol{\lambda}_i$}')$
		\ENDIF
		\ENDFOR
		\ENDWHILE
		\RETURN $\boldsymbol{\hat\lambda}_i, S_L(\boldsymbol{\hat\lambda}_i), K_L(\boldsymbol{\hat\lambda}_i), S_L(\underbar{$\boldsymbol{\lambda}_i$}), K_L(\underbar{$\boldsymbol{\lambda}_i$})$
	\end{algorithmic}
\end{algorithm}

Algorithm \ref{alg2} presents the lower level graph search over graph $\Gamma_L$. The algorithm is made up of two parts. The first part (lines 1-13) finds the node that satisfies the single-agent optimality condition \eqref{objfuncllnew}. Whereas the second part (lines 14-24) finds the node which satisfies the weak optimality condition \eqref{objfuncllweak}. This latter part finds a node which helps us in the Branch\&Bound algorithm.

The first part is a uniform cost graph search over the set $\Phi^v_L$. The search finds a valid goal node which minimizes the cost function $V_L$. For a graph search to be optimal, meaning it finds the node which satisfies the optimality condition, the cost of a child node should be greater than or equal to the cost of its parent node, as shown in the book \cite{russell1995modern}. This is called the monotonicity of cost. For algorithm \ref{alg2} this is shown easily. Consider a node $\boldsymbol{\lambda}_i$ and it child $\boldsymbol{\lambda'}_i$. If the $\boldsymbol{\lambda'}_i \notin \Phi^f_L$, $V(\boldsymbol{\lambda}_i) = S(\boldsymbol{\lambda}_i)$ and $V(\boldsymbol{\lambda'}_i) = S(\boldsymbol{\lambda'}_i)$. As we know that $\boldsymbol{\lambda'}_i$ is obtained by adding a leg to the composite path of $\boldsymbol{\lambda}_i$, $S(\boldsymbol{\lambda'}_i) > S(\boldsymbol{\lambda}_i)$, which leads to, $V(\boldsymbol{\lambda'}_i) > V(\boldsymbol{\lambda}_i)$. If the $\boldsymbol{\lambda'}_i \in \Phi^f_L$, using similar reasoning we arrive at the same conclusion. Hence, $V(\boldsymbol{\lambda'}_i) > V(\boldsymbol{\lambda}_i)$.

The second part of the algorithm performs a uniform cost graph search over the set $\Phi^{wv}_L$. It finds a node which satisfies the weak optimality condition. Since the weak cost satisfies monotonicity property this part is also optimal. This node and the values associated with it are used in the Branch\&Bound algorithm.

In the algorithm \ref{alg2} $Nodes(\Gamma_{L})$ refers to the set of all possible nodes in $\Gamma_{L}$. $\Omega$ is the open set (the set of nodes yet to be expanded) but is only limited to valid nodes. Whereas $\underline\Omega$ is also an open set but is limited to nodes which are not valid but satisfy weak validity.

\section{Multi-agent problem} \label{sec:multiglider}
In this section we solve the multi-agent problem. This corresponds to finding a feasible and valid composite path for each glider such that the interest points visited are maximized. We decompose this problem into many single-agent problems by allocating a set of interest points to each glider. The single-agent problem is solved by the lower level graph search as described in section \ref{sec:singleglider}. The problem now is to find the best allocation of interest points for each glider. This section presents two ways of achieving this objective. The first one is a brute force approach as presented in our earlier work \cite{uzpath}. The second is a Branch\&Bound type graph search, called the lower level graph search. The reason for using Branch\&Bound instead of uniform cost graph search is that we were unable to prove the monotonicity property in the upper level graph. The upper level graph search is shown to be faster than the brute force search via simulations. 

The section starts with introducing some values associated with each set of allocations $\boldsymbol\Xi = \{ \boldsymbol\xi_i \}$ and its notation. $\boldsymbol\Xi$ refers to the set of allocations and $\boldsymbol\xi_i$ is the allocation to a particular glider $i$. Secondly, the multi-agent optimality condition is restated along with its reformulation in the current notation. After that brute force search is presented. This is followed by the construction of the upper level graph $\Gamma_U$. Finally, the Branch\&Bound algorithm along with its optimality guarantees is presented.

\subsection{Values associated with each $\boldsymbol\xi_i$ and $\boldsymbol\Xi$}
The values associated with allocations $\boldsymbol\xi_i$ are denoted by regular fonts like $K_U$, $S_U$ or $V_U$ whereas the values associated with set of allocations $\boldsymbol{\Xi}$ are denoted by the Blackboard Bold fonts like $\mathbb{K}_U$, $\mathbb{S}_U$ or $\mathbb{V}_U$.
\subsubsection{Optimal visitation order and composite path}
Each allocation $\boldsymbol\xi_i$ has an optimal visitation order and optimal composite path denoted by $\boldsymbol{\lambda}^{\boldsymbol{\xi}_i}_i$ and $r^{\boldsymbol{\xi}_i}_{i,j}$ respectively. These satisfy the single-agent optimality condition \eqref{objfuncllnew} and have been obtained through the lower level graph search. They are defined in equation \eqref{lambdaxi=lambdaphi}.

\subsubsection{Unvisited interest points}
Each allocation $\boldsymbol\xi_i$ has number of unvisited interest points denoted by $K_U(\boldsymbol{\xi}_i)$.
\begin{equation} \label{unvisintptul}
K_U(\boldsymbol{\xi}_i) = K_L(\boldsymbol{\lambda}^{\boldsymbol{\xi}_i}_i,\boldsymbol{\xi}_i)
\end{equation}
Similarly, each set of allocation $\boldsymbol\Xi = \{\boldsymbol{\xi}_i\}$ has number of unvisited interest points, which is simply the sum of unvisited points over all its allocations,
\begin{equation}
\mathbb{K}_U(\boldsymbol\Xi) = \sum_{i=1}^{n_g} K_U(\boldsymbol\xi_i)
\end{equation}

\subsubsection{Arclength}
Each allocation $\boldsymbol\xi_i$ has an arclength $S_U(\boldsymbol{\xi}_i)$ which is the arclength of its optimal composite path.
\begin{equation} \label{arclengthul}
S_U(\boldsymbol{\xi}_i) = S_L(\boldsymbol{\lambda}^{\boldsymbol{\xi}_i}_i)
\end{equation}
The arclength for a set of allocations $\boldsymbol\Xi$ is defined as,
\begin{equation}
\mathbb{S}_U(\boldsymbol\Xi) = \sum_{i=1}^{n_g} S_U(\boldsymbol\xi_i)
\end{equation}

\subsubsection{Cost}
The cost of a set of allocations $\boldsymbol\Xi$ is defined similar to the lower level graph as,
\begin{equation}
\mathbb{V}_U(\boldsymbol\Xi) = \mathbb{S}_U(\boldsymbol{\Xi}) + P_U \mathbb{K}_U(\boldsymbol{\Xi})
\end{equation}
\begin{equation*}
 P_U =  \left( \sum_{i = 1}^{n_g} {h^0_{i}} + \sum_{j=1}^{n_t} h_{t_j} + 1 \right) / \tan(\gamma_{d_{min}})
\end{equation*}
The $P_U \mathbb{K}_U(\boldsymbol{\Xi})$ term, is a penalty term which increases the cost of allocations with more unvisited interest points and hence makes them less optimal. This is similar to the penalty term $P_L$ in the lower level graph search.

\subsubsection{Ideal cost}
The ideal cost for an allocation $\boldsymbol{\xi}_i$ is defined as,
\begin{equation} \label{idealcostul}
V^*_U(\boldsymbol{\xi}_i) =  V^*_L(\boldsymbol{\lambda}^*_i, \boldsymbol{\xi}_i)
\end{equation}
here $\boldsymbol{\lambda}^*_i$ is the node which satisfies the ideal optimality condition \eqref{objfuncllideal} for allocation $\boldsymbol{\xi}_i$ to glider $i$. This cost is actually not determinable since the node $\boldsymbol{\lambda}^*_i$ cannot be determined.

\subsubsection{Weak cost}
We also define another term called the weak cost for an allocation $\boldsymbol{\xi}_i$. 
\begin{equation} \label{lowerlimitul}
\underbar{V}_U(\boldsymbol{\xi}_i) =  \underbar{V}_L(\boldsymbol{\underbar{$\lambda$}}_i, \boldsymbol{\xi}_i)
\end{equation}
here $\boldsymbol{\underbar{$\lambda$}}_i$ is the node which satisfies the weak optimality condition \eqref{objfuncllweak} for allocation $\boldsymbol{\xi}_i$ to glider $i$. This cost is used the Branch\&Bound algorithm. The set of allocations $\boldsymbol\Xi$ also has a weak cost associated with it, defined as,
\begin{equation}
\underline{\mathbb{V}}_U(\boldsymbol\Xi) = \sum_{\forall i} \underbar{V}_U(\boldsymbol\xi_i) 
\end{equation}

\subsection{Upper level graph $\Gamma_U$}
Upper level graph denoted by $\Gamma_U$ is the graph, in which waypoints are allocated to different gliders.
A node in $\Gamma_{U}$ is a set of allocations $\boldsymbol{\Xi} = \{\boldsymbol{\xi}_i\}$. Each  $\boldsymbol{\xi}_i$ is the set of interest points allocated to a glider $i$.

The root (starting) node of $\Gamma_U$ is denoted by $\boldsymbol{\Xi}^0$. The waypoint allocations associated with this node, $\boldsymbol{\xi}^0_i = \{\}$ are empty sets, meaning that no waypoint has been allocated to any glider yet. The child $\boldsymbol{\Xi}'$ to a node $\boldsymbol{\Xi}$ is obtained by allocating an unallocated waypoint to one of the gliders. This is shown below,
\begin{multline}
	\boldsymbol{\Xi}' \in Children(\boldsymbol{\Xi}) \hspace{0.1cm} \text{only if} \hspace{0.1cm}  \exists l \in \{1,..,n_g\}\ \hspace{0.1cm} \text{s.t.} \\
	\boldsymbol{\xi}'_l = \left\{\begin{array}{ll}
		\boldsymbol{\xi}_l \cup \{\zeta\} \hspace{0.1cm} s.t. \hspace{0.1cm} \zeta \in \{1,..,n_w\} \setminus \bigcup_{i=1}^{n_g} \boldsymbol{\xi}_i \\ \boldsymbol{\xi}_j \hspace{0.1cm} \text{for} \hspace{0.1cm} \forall j \neq l \,.
	\end{array} \right.
\end{multline}
This means that all waypoint allocations for the child node $\boldsymbol{\xi}'_i$, are the same as the waypoint allocations for the parent node $\boldsymbol{\xi}_i$, except for $\boldsymbol{\xi}'_l$. The graph search on $\Gamma_U$ terminates when it encounters a goal node. The set of goal nodes of $\Gamma_U$ is denoted as $\Phi^f_U$. A node $\boldsymbol{\Xi} \in \Phi^f_U$, if $\bigcup_{i=1}^{n_g} \boldsymbol{\xi}_i = \{1,..,n_w\}$, meaning that it is exhaustive.

\subsection{Optimality condition}
The optimality condition for the multi-agent problem is to find the set of allocations which are exhaustive and minimize the total number of unvisited interest points. Moreover, if there are more than one such sets of allocations than we should find the one with the least accumulated arclength. This condition can be expressed in the current notation as,
\begin{equation} \label{optmulti}
\argmin_{\boldsymbol{\Xi}^*} \mathbb{S}_U(\boldsymbol{\Xi}^*) \hspace{0.1cm} \text{s.t.} \hspace{0.1cm} \boldsymbol{\Xi}^* \in \{\boldsymbol{\Xi} | \argmin_{\boldsymbol{\Xi}} \mathbb{K}_U(\boldsymbol{\Xi}) \} \cap \Phi^f_U
\end{equation}

Now consider another condition of optimality where we find the set of allocations which are exhaustive and minimize the cost $\mathbb{V}_U$.
\begin{equation} \label{optmultinewgraph}
\argmin_{\boldsymbol{\Xi}^*}\mathbb{V}_U({\boldsymbol{\Xi}^*}) \hspace{0.1cm} \text{s.t.} \hspace{0.1cm} \boldsymbol{\Xi}^* \in \Phi^f_U \,.
\end{equation}

Theorem \ref{lem3} guarantees that if a set of allocations $\boldsymbol\Xi $ satisfy this condition then it will satisfy the optimality condition for the multi-agent problem. For this to be true, $P_U$ has to be an upper bound on the distance the gliders can collectively travel. This can be expressed as an upper bound on collective height of the gliders divided by $\tan(\gamma_{d_{min}})$. An upper bound on height of the gliders is given by $\sum_{i = 1}^{n_g} {h^0_{i}} + \sum_{j=1}^{n_t} h_{t_j}$.
\begin{theorem} \label{lem3}                                                                                                              
If a node $\boldsymbol{\Xi}$ satisfies \eqref{optmultinewgraph} then it will satisfy the multi-agent optimality condition \eqref{optmulti}.
\end{theorem}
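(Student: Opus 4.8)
The plan is to mirror the two-part argument used for Theorem \ref{lem4}, since conditions \eqref{optmulti}--\eqref{optmultinewgraph} are the upper-level analogues of \eqref{objfuncllorig}--\eqref{objfuncllnew}, with $(\mathbb{S}_U,\mathbb{K}_U,\mathbb{V}_U,P_U,\Phi^f_U)$ playing the roles of $(S_L,K_L,V_L,P_L,\Phi^f_L)$. Let $\boldsymbol{\hat\Xi}$ be a goal node satisfying \eqref{optmultinewgraph}, so that $\mathbb{V}_U(\boldsymbol{\hat\Xi}) \leq \mathbb{V}_U(\boldsymbol{\Xi})$ for every $\boldsymbol{\Xi} \in \Phi^f_U$. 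I must show that $\boldsymbol{\hat\Xi}$ first minimizes $\mathbb{K}_U$ over $\Phi^f_U$, and then minimizes $\mathbb{S}_U$ among all goal nodes attaining that minimal $\mathbb{K}_U$.

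First I would prove that $\boldsymbol{\hat\Xi}$ minimizes $\mathbb{K}_U$ over $\Phi^f_U$, by contradiction. Suppose some $\boldsymbol{\tilde\Xi} \in \Phi^f_U$ satisfies $\mathbb{K}_U(\boldsymbol{\tilde\Xi}) < \mathbb{K}_U(\boldsymbol{\hat\Xi})$. Substituting $\mathbb{V}_U = \mathbb{S}_U + P_U\mathbb{K}_U$ into $\mathbb{V}_U(\boldsymbol{\hat\Xi}) \leq \mathbb{V}_U(\boldsymbol{\tilde\Xi})$ and rearranging gives
\begin{equation*}
\bigl(\mathbb{K}_U(\boldsymbol{\hat\Xi}) - \mathbb{K}_U(\boldsymbol{\tilde\Xi})\bigr)P_U \leq \mathbb{S}_U(\boldsymbol{\tilde\Xi}) - \mathbb{S}_U(\boldsymbol{\hat\Xi}) \leq \mathbb{S}_U(\boldsymbol{\tilde\Xi}) \,.
\end{equation*}
Because $\mathbb{K}_U$ is integer-valued the gap on the left is at least $1$, so it suffices to show that $P_U$ strictly exceeds the largest possible value of $\mathbb{S}_U(\boldsymbol{\tilde\Xi})$, which yields the contradiction. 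By the definition $P_U = \bigl(\sum_i h^0_i + \sum_j h_{t_j} + 1\bigr)/\tan(\gamma_{d_{min}})$, this reduces to showing $\mathbb{S}_U(\boldsymbol{\tilde\Xi}) \leq \bigl(\sum_i h^0_i + \sum_j h_{t_j}\bigr)/\tan(\gamma_{d_{min}})$ for any valid goal node.

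I expect that collective arclength bound to be the main obstacle. For a single glider the validity constraint \eqref{cons:h} forces $S_U(\boldsymbol\xi_i) = S_L(\boldsymbol\lambda^{\boldsymbol\xi_i}_i) \leq \bigl(h^0_i + \sum_{l=1}^{n_t}\mathbbm{1}_{\boldsymbol\lambda_i}(t_l)h_{t_l}\bigr)/\tan(\gamma_{d_{min}})$, i.e. the per-glider height budget over $\tan(\gamma_{d_{min}})$. Summing this over $i$ bounds $\mathbb{S}_U$ by the total available height over $\tan(\gamma_{d_{min}})$, and the delicate point is that the pooled thermal contribution must collapse to $\sum_j h_{t_j}$; I would invoke the fact (asserted by the paper in stating the upper bound immediately before the theorem) that each thermal's height contributes at most once to the collective budget. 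Establishing that this collective quantity is indeed an upper bound is exactly what makes $P_U$ dominate the worst-case $\mathbb{S}_U$ and closes the contradiction.

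Finally, for the tie-break, I would take any $\boldsymbol{\dot\Xi} \in \Phi^f_U$ with $\mathbb{K}_U(\boldsymbol{\dot\Xi}) = \mathbb{K}_U(\boldsymbol{\hat\Xi})$. Then $\mathbb{V}_U(\boldsymbol{\hat\Xi}) \leq \mathbb{V}_U(\boldsymbol{\dot\Xi})$ with the equal penalty terms $P_U\mathbb{K}_U$ cancelling yields
\begin{equation*}
\mathbb{S}_U(\boldsymbol{\hat\Xi}) \leq \mathbb{S}_U(\boldsymbol{\dot\Xi}) \,,
\end{equation*}
so $\boldsymbol{\hat\Xi}$ has the least accumulated arclength among all minimal-$\mathbb{K}_U$ goal nodes. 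The two parts together are precisely the nested minimization in \eqref{optmulti}, completing the proof.
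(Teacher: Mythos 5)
Your proposal follows essentially the same two-part argument as the paper's own proof: a contradiction argument in which the integer gap $\mathbb{K}_U(\hat{\boldsymbol{\Xi}})-\mathbb{K}_U(\tilde{\boldsymbol{\Xi}})\geq 1$ makes $P_U$ dominate any achievable $\mathbb{S}_U(\tilde{\boldsymbol{\Xi}})$ (hence $\hat{\boldsymbol{\Xi}}$ minimizes $\mathbb{K}_U$ over $\Phi^f_U$), followed by cancellation of the equal penalty terms to obtain the arclength tie-break. The one step you rightly flag as delicate --- that the summed per-glider height budgets collapse to $\sum_i h^0_i + \sum_j h_{t_j}$ rather than counting each thermal once per glider that visits it --- is asserted without further justification in the paper's proof as well, so your treatment is no less complete than the original.
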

\begin{proof}
We prove this theorem in two parts. Let us assume $\hat{\boldsymbol{\Xi}}$ is the node which satisfies the condition \eqref{optmultinewgraph}.

First, we will prove that $\hat{\boldsymbol{\Xi}}$ has the least number of unvisited interest points in the set $\Phi^f_U$. This is a proof by contradiction. Suppose $\exists \tilde{\boldsymbol{\Xi}} \in \Phi^f_U$ and has fewer unvisited interest points, $\mathbb{K}_U(\tilde{\boldsymbol{\Xi}}) < \mathbb{K}_U(\hat{\boldsymbol{\Xi}})$. We have,
\begin{equation*}
\mathbb{V}_U(\hat{\boldsymbol{\Xi}}) \leq \mathbb{V}_U(\tilde{\boldsymbol{\Xi}})\,, \hspace{0.1cm} \text{(by definition)}
\end{equation*}
\begin{equation*}
\mathbb{S}_U(\hat{\boldsymbol{\Xi}}) + P_U \mathbb{K}_U(\hat{\boldsymbol{\Xi}}) \leq \mathbb{S}_U(\tilde{\boldsymbol{\Xi}}) + P_U \mathbb{K}_U(\tilde{\boldsymbol{\Xi}}) \,,
\end{equation*}
\begin{equation*}
\left(  \mathbb{K}_U(\hat{\boldsymbol{\Xi}}) - \mathbb{K}_U(\tilde{\boldsymbol{\Xi}}) \right) P_U \leq \mathbb{S}_U(\tilde{\boldsymbol{\Xi}}) - \mathbb{S}_U(\hat{\boldsymbol{\Xi}}) \,,
\end{equation*}
\begin{equation*}
\left( \mathbb{K}_U(\hat{\boldsymbol{\Xi}}) - \mathbb{K}_U(\tilde{\boldsymbol{\Xi}}) \right) P_U \leq \frac{ \sum_{i = 1}^{n_g} {h^0_{i}} + \sum_{j=1}^{n_t} h_{t_j} }{ \tan(\gamma_{d_{min}})} \,.
\end{equation*}
The last step was made possible by the fact that the arclength $\mathbb{S}_U$ is always upper bounded by the term on the right hand side. This term is the upper bound on the arclength. Using the definition of $P_U$ and the fact that $\mathbb{K}_U(\hat{\boldsymbol{\Xi}}) - \mathbb{K}_U(\tilde{\boldsymbol{\Xi}}) \geq 1$, we get a contradiction.

Secondly, we prove that $\hat{\boldsymbol{\Xi}}$ has the least accumulated arclength for all exhaustive sets of allocations which have the same number of unvisited interest points as $\hat{\boldsymbol{\Xi}}$. Assume  $\tilde{\boldsymbol{\Xi}}$ is such a set, this means it is exhaustive and $ \mathbb{K}_U(\hat{\boldsymbol{\Xi}}) = \mathbb{K}_U(\tilde{\boldsymbol{\Xi}})$. We have,
\begin{equation*}
\mathbb{V}_U(\hat{\boldsymbol{\Xi}}) \leq \mathbb{V}_U(\tilde{\boldsymbol{\Xi}})\,,
\end{equation*}
\begin{equation*}
\mathbb{S}_U(\hat{\boldsymbol{\Xi}}) + P_U \mathbb{K}_U(\hat{\boldsymbol{\Xi}}) \leq \mathbb{S}_U(\tilde{\boldsymbol{\Xi}}) + P_U \mathbb{K}_U(\tilde{\boldsymbol{\Xi}}) \,,
\end{equation*}
\begin{equation*}
\mathbb{S}_U(\hat{\boldsymbol{\Xi}}) \leq \mathbb{S}_U(\tilde{\boldsymbol{\Xi}}) \,.
\end{equation*}
Hence $\hat{\boldsymbol{\Xi}}$ has the minimum accumulated arclength for equal number of unvisited interest points.
\end{proof}

\subsection{Brute force search}
In our earlier work \cite{uzpath} we find a set of allocations which satisfy the condition \eqref{optmultinewgraph} by calculating the cost for each exhaustive set of allocations and picking the one that minimizes the cost. This is called brute force search and it is very computationally expensive. Hence in the next section we outline a Branch\&Bound type search, called the upper level graph search. This is also optimal and is shown to be faster than the brute force search via simulations.

%
\subsection{Upper level graph search}
Algorithm \ref{alg1} is a Branch\&Bound type graph search over $\Gamma_U$. The reason we cannot perform a uniform cost search on this graph is that we cannot prove monotonicity of cost. Monotonicity means that the cost of the child of a node $\boldsymbol{\Xi}$ will be strictly higher than the cost of the node itself. Instead we prove that the weak cost of a node $\underline{\mathbb{V}}_U(\boldsymbol\Xi) $ is actually the lower bound on the cost of its descendant. This is rigorously proven in theorem \ref{upperlevelbound}. As shown in \cite{bertsekas1995dynamic} this condition is sufficient to guarantee that Algorithm \ref{alg1} will returns the goal node which satisfies the optimality condition \eqref{optmultinewgraph}.

\begin{algorithm}[h]
	\caption {Upper Level Branch\&Bound Search}
	\label{alg1}
	\begin{algorithmic}[1]
		\STATE $\Psi = \{\boldsymbol{\Xi}^0\}$
		\STATE $\mathbb{V}_U(\boldsymbol{\Xi}^0)=0, Upper = \infty$
		\STATE $\mathbb{V}_U(\boldsymbol{\Xi})=\infty \quad \forall \boldsymbol{\Xi} \in {Nodes}(\Gamma_U) \textrm{ and } \boldsymbol{\Xi} \neq \boldsymbol{\Xi}^0$
		\WHILE {$\Psi \neq \{\}$}
		\STATE $\textrm{remove } \boldsymbol{\Xi} \textrm{ from } \Psi \textrm{ with the smallest } \mathbb{V}_U(\boldsymbol{\Xi})$
		\IF {$\underline{\mathbb{V}}_U({\boldsymbol{\Xi}}) < Upper$}
		\FORALL {$\boldsymbol{\Xi}' \in Children(\boldsymbol{\Xi}) \textbf{ and } \mathbb{V}_U(\boldsymbol{\Xi}') = \infty $}
		\STATE Calculate $\mathbb{V}_U(\boldsymbol{\Xi}')$ and $\underline{\mathbb{V}}_U({\boldsymbol{\Xi}'}) $ 
		\IF {$\boldsymbol{\Xi}' \in \Phi^f_U \textbf{ and }  \mathbb{V}_U(\boldsymbol{\Xi}') < Upper$}
		\STATE $Upper = \mathbb{V}_U(\boldsymbol{\Xi}')$
		\STATE $\hat{\boldsymbol{\Xi}} = \boldsymbol{\Xi}' $
		\ENDIF 
		\IF {$\underline{\mathbb{V}}_U({\boldsymbol{\Xi}'}) \leq Upper$}
		\STATE $ \text{Insert } \boldsymbol{\Xi}' \text{ in } \Psi \text{ with cost } \mathbb{V}_U(\boldsymbol{\Xi}')$
		\ENDIF
		\ENDFOR
		\ENDIF
		\ENDWHILE
		\RETURN $\hat{\boldsymbol{\Xi}}$
	\end{algorithmic}
\end{algorithm}
In the pseudo code for Algorithm \ref{alg1}, $\Psi$ is the Open Set (the set of nodes not yet expanded), and $Nodes(\Gamma_U)$ refers to the set of all possible nodes in $\Gamma_U$.

First some intermediate results are shown in lemmas \ref{V*U<V*U} and \ref{V*U<<V*U}. Then theorem \ref{upperlevelbound} is presents the main guarantee of the paper. 
\begin{lemma} \label{V*U<V*U}
Let $\boldsymbol{\Xi}''$ be a child of the node $\boldsymbol{\Xi}'$. The ideal cost of $\boldsymbol{\Xi}'$ will be a lower bound for ideal cost of $\boldsymbol{\Xi}''$, namely $V^*_U (\boldsymbol{\xi}'_i) \leq V^*_U (\boldsymbol{\xi}''_i)$.
\end{lemma}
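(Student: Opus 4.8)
The plan is to reduce the statement to a single glider and then to the effect of appending one interest point to that glider's allocation. Recall that a child $\boldsymbol{\Xi}''$ of $\boldsymbol{\Xi}'$ differs from $\boldsymbol{\Xi}'$ in exactly one glider's allocation, say glider $l$, with $\boldsymbol{\xi}''_l = \boldsymbol{\xi}'_l \cup \{\zeta\}$ for some previously unallocated interest point $\zeta$, while $\boldsymbol{\xi}''_i = \boldsymbol{\xi}'_i$ for every $i \neq l$. For $i \neq l$ the claimed inequality is an equality and is immediate from \eqref{idealcostul}. It therefore suffices to establish the monotonicity statement for a single allocation, namely $V^*_U(\boldsymbol{\xi}) \leq V^*_U(\boldsymbol{\xi} \cup \{\zeta\})$ where $\boldsymbol{\xi} = \boldsymbol{\xi}'_l$; I suppress the glider subscript from here on.

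First I would unwind the definitions. By \eqref{idealcostul}, $V^*_U(\boldsymbol{\xi} \cup \{\zeta\}) = V^*_L(\bar{\boldsymbol{\lambda}}, \boldsymbol{\xi} \cup \{\zeta\})$, where $\bar{\boldsymbol{\lambda}}$ is the ideally optimal node of condition \eqref{objfuncllideal} in the graph $\Gamma_L(\boldsymbol{\xi} \cup \{\zeta\})$, and $V^*_U(\boldsymbol{\xi})$ is the minimum of $V^*_L(\,\cdot\,,\boldsymbol{\xi})$ over all nodes in $\Phi^f_L \cap \Phi^{iv}_L$ of $\Gamma_L(\boldsymbol{\xi})$. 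The strategy is to exhibit a single candidate node for the smaller allocation whose ideal cost is no larger than $V^*_L(\bar{\boldsymbol{\lambda}}, \boldsymbol{\xi} \cup \{\zeta\})$; optimality of $\boldsymbol{\lambda}^*$ then yields the inequality. The candidate I would use is $\tilde{\boldsymbol{\lambda}} := \bar{\boldsymbol{\lambda}} \setminus \{\zeta\}$, the visitation order obtained by deleting $\zeta$ from $\bar{\boldsymbol{\lambda}}$ (a no-op when $\zeta \notin \bar{\boldsymbol{\lambda}}$).

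Three facts would then be checked. (i) $\tilde{\boldsymbol{\lambda}}$ is a legal goal node of $\Gamma_L(\boldsymbol{\xi})$: it still ends in $f$ (only $\zeta \neq f$ was deleted) and uses only interest points of $\boldsymbol{\xi}$, thermals, and $f$. (ii) Its count of unvisited points does not increase, $K_L(\tilde{\boldsymbol{\lambda}}, \boldsymbol{\xi}) \leq K_L(\bar{\boldsymbol{\lambda}}, \boldsymbol{\xi} \cup \{\zeta\})$; this is a short set-counting argument giving equality when $\zeta \in \bar{\boldsymbol{\lambda}}$ and a decrease by one when $\zeta \notin \bar{\boldsymbol{\lambda}}$. (iii) Its ideal arclength does not increase, $S^*_L(\tilde{\boldsymbol{\lambda}}) \leq S^*_L(\bar{\boldsymbol{\lambda}})$. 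For (iii) the key observation is that any feasible path for $\bar{\boldsymbol{\lambda}}$ also satisfies the constraints defining a feasible path for $\tilde{\boldsymbol{\lambda}}$: deleting $\zeta$ removes a single interpolation constraint while retaining the start configuration, the remaining waypoints in order, and the curvature and sharpness bounds \eqref{cons:kappa_max}-\eqref{cons:sigma_max}. Hence the admissible set for $\tilde{\boldsymbol{\lambda}}$ contains that for $\bar{\boldsymbol{\lambda}}$, and the minimum arclength over the larger set cannot be bigger. Combining (ii) and (iii) with the definition of $V^*_L$ gives $V^*_L(\tilde{\boldsymbol{\lambda}}, \boldsymbol{\xi}) \leq V^*_L(\bar{\boldsymbol{\lambda}}, \boldsymbol{\xi} \cup \{\zeta\})$.

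The main obstacle is establishing that $\tilde{\boldsymbol{\lambda}} \in \Phi^{iv}_L$, because ideal validity \eqref{idealvalidcheck} is recursive and demands that every ancestor (prefix) of $\tilde{\boldsymbol{\lambda}}$ be ideally valid too. I would handle this by induction over prefixes: each prefix of $\tilde{\boldsymbol{\lambda}}$ is the corresponding prefix of $\bar{\boldsymbol{\lambda}}$ with $\zeta$ deleted, so the admissible-set inclusion of (iii) gives $S^*_L(\text{prefix of } \tilde{\boldsymbol{\lambda}}) \leq S^*_L(\text{prefix of } \bar{\boldsymbol{\lambda}})$, while deleting an interest point leaves the thermal content, and hence $S^{max}_L$ by \eqref{upperboundarclength}, unchanged. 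Since $\bar{\boldsymbol{\lambda}} \in \Phi^{iv}_L$ forces every prefix of $\bar{\boldsymbol{\lambda}}$ to satisfy its arclength bound, the same bound holds for every prefix of $\tilde{\boldsymbol{\lambda}}$, and induction from the empty root yields $\tilde{\boldsymbol{\lambda}} \in \Phi^{iv}_L$. With (i)-(iii) and ideal validity secured, optimality of $\boldsymbol{\lambda}^*$ for $\boldsymbol{\xi}$ closes the chain $V^*_U(\boldsymbol{\xi}) = V^*_L(\boldsymbol{\lambda}^*, \boldsymbol{\xi}) \leq V^*_L(\tilde{\boldsymbol{\lambda}}, \boldsymbol{\xi}) \leq V^*_L(\bar{\boldsymbol{\lambda}}, \boldsymbol{\xi} \cup \{\zeta\}) = V^*_U(\boldsymbol{\xi} \cup \{\zeta\})$, which completes the proof.
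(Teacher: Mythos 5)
Your proof is correct, and it rests on the same central construction as the paper's — deleting the newly allocated interest point (your $\zeta$, the paper's $\hat\xi$) from the ideally optimal visitation order of the enlarged allocation to manufacture a competitor for the smaller allocation — but you organize the comparison differently, and more cleanly. The paper splits into three cases according to whether $K_L(\boldsymbol{\lambda}^*_j,\boldsymbol{\xi}'_j)$ is equal to, less than, or greater than $K_L(\boldsymbol{\lambda}^{**}_j,\boldsymbol{\xi}''_j)$: the first case invokes Lemma \ref{lem4_ideal} to compare ideal arclengths at fixed interest-point count, the second uses the penalty bound $P_L$, and the third is ruled out by a separate contradiction argument. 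You bypass all of this by noting that the two ingredients of $V^*_L$ are separately monotone under your deletion — $K_L$ does not increase because the allocation shrinks by one while the visitation order loses at most one visited point, and $S^*_L$ does not increase because dropping an interpolation constraint only enlarges the admissible set of feasible paths — and then appealing directly to the minimality of $V^*_L(\boldsymbol{\lambda}^*_i,\boldsymbol{\xi}'_i)$ over ideally valid goal nodes, rather than to Lemma \ref{lem4_ideal}. This eliminates the case analysis entirely, and your handling of the case $\zeta \notin \bar{\boldsymbol{\lambda}}$ as a no-op deletion quietly absorbs what the paper treats as a separate possibility. Your prefix-by-prefix verification that the truncated order remains ideally valid — using that $S^{max}_L$ in \eqref{upperboundarclength} depends only on the thermal content, which deletion of an interest point preserves — is also spelled out more carefully than in the paper, which asserts the ideal validity of $\tilde{\boldsymbol{\lambda}}_j$ in a single sentence.
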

\begin{proof}
The proof is given in the Appendix.
\end{proof}

\begin{lemma} \label{V*U<<V*U}
Let $\boldsymbol{\Xi}''$ be a descendant (child, child of child...) of the node $\boldsymbol{\Xi}'$. The ideal cost of $\boldsymbol{\Xi}'$ will be a lower bound for ideal cost of $\boldsymbol{\Xi}''$, namely $V^*_U (\boldsymbol{\xi}'_i) \leq V^*_U (\boldsymbol{\xi}''_i)$.
\end{lemma}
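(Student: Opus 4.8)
The plan is to obtain Lemma \ref{V*U<<V*U} as a direct transitive consequence of the single-step result, Lemma \ref{V*U<V*U}, which already handles the case in which $\boldsymbol{\Xi}''$ is an immediate child of $\boldsymbol{\Xi}'$. Since a descendant is, by definition, reached from $\boldsymbol{\Xi}'$ by a finite succession of child relations, the argument is simply an induction on the length of that ancestry chain, with the child case serving as the base step.

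Concretely, I would write the chain connecting the two nodes as $\boldsymbol{\Xi}' = \boldsymbol{\Xi}^{(0)}, \boldsymbol{\Xi}^{(1)}, \ldots, \boldsymbol{\Xi}^{(m)} = \boldsymbol{\Xi}''$, where each $\boldsymbol{\Xi}^{(k+1)}$ is an immediate child of $\boldsymbol{\Xi}^{(k)}$. Such a finite chain always exists because each child relation adds exactly one interest point to one glider's allocation, so $\Gamma_U$ has depth at most $n_{ip}$ and every descendant is separated from an ancestor by finitely many edges. Writing $\boldsymbol{\xi}^{(k)}_i$ for the allocation to glider $i$ in node $\boldsymbol{\Xi}^{(k)}$ and fixing a glider $i$, Lemma \ref{V*U<V*U} applied to the consecutive pair $(\boldsymbol{\Xi}^{(k)}, \boldsymbol{\Xi}^{(k+1)})$ yields $V^*_U(\boldsymbol{\xi}^{(k)}_i) \leq V^*_U(\boldsymbol{\xi}^{(k+1)}_i)$ for every $k \in \{0,\ldots,m-1\}$. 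For the single glider whose allocation actually grows at step $k$ this is the nontrivial content of Lemma \ref{V*U<V*U}; for every other glider the two allocations coincide and the inequality holds trivially with equality.

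Chaining these $m$ inequalities by transitivity of $\leq$ then gives $V^*_U(\boldsymbol{\xi}'_i) = V^*_U(\boldsymbol{\xi}^{(0)}_i) \leq V^*_U(\boldsymbol{\xi}^{(m)}_i) = V^*_U(\boldsymbol{\xi}''_i)$, which is exactly the claim, and it holds for every glider index $i$. I do not anticipate a genuine obstacle: the only points needing care are the bookkeeping that guarantees a well-defined finite chain of child relations and the verification that Lemma \ref{V*U<V*U} is legitimately applicable at each intermediate node (each $\boldsymbol{\Xi}^{(k)}$ is itself a valid node of $\Gamma_U$, being a mutually exclusive allocation). All the analytic content — the actual monotonicity of ideal cost under enlarging an allocation — is already discharged in Lemma \ref{V*U<V*U}, so this lemma reduces to a purely inductive transitivity step.
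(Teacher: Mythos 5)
Your proposal is correct and follows exactly the paper's own argument: the paper proves this lemma by applying Lemma \ref{V*U<V*U} recursively along the chain of child relations, which is precisely the induction-plus-transitivity step you spell out.
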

\begin{proof}
The proof of this lemma can be obtained by using the lemma \ref{V*U<V*U} recursively.
\end{proof}

\begin{theorem} \label{upperlevelbound}
Let $\boldsymbol{\Xi}''$ be a descendant (child, child of child ...) of the node $\boldsymbol{\Xi}'$. Then the weak cost of $\boldsymbol{\Xi}'$ will be a lower bound on the cost of $\boldsymbol{\Xi}''$, namely $\underline{\mathbb{V}}_U({\boldsymbol{\Xi}'}) \leq \mathbb{V}_U({\boldsymbol{\Xi}''}) $.
\end{theorem}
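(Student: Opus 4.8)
The plan is to exploit the fact that both the weak cost $\underline{\mathbb{V}}_U$ and the cost $\mathbb{V}_U$ decompose additively over the gliders, so that the descendant relationship can be analysed one glider at a time. First I would observe that because every edge of $\Gamma_U$ allocates a single previously unallocated waypoint to exactly one glider, passing from $\boldsymbol{\Xi}'$ to any descendant $\boldsymbol{\Xi}''$ only enlarges each glider's allocation, i.e. $\boldsymbol{\xi}'_i \subseteq \boldsymbol{\xi}''_i$ for every $i$. Writing $\underline{\mathbb{V}}_U(\boldsymbol{\Xi}') = \sum_i \underbar{V}_U(\boldsymbol{\xi}'_i)$ and $\mathbb{V}_U(\boldsymbol{\Xi}'') = \mathbb{S}_U(\boldsymbol{\Xi}'') + P_U \mathbb{K}_U(\boldsymbol{\Xi}'') = \sum_i \left( S_U(\boldsymbol{\xi}''_i) + P_U K_U(\boldsymbol{\xi}''_i) \right)$, it suffices to prove the per-glider inequality $\underbar{V}_U(\boldsymbol{\xi}'_i) \leq S_U(\boldsymbol{\xi}''_i) + P_U K_U(\boldsymbol{\xi}''_i)$ and then sum over $i$.

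The core of the argument is a chain of inequalities bridging the weak cost at $\boldsymbol{\xi}'_i$ to the penalised cost at $\boldsymbol{\xi}''_i$ through the ideal cost. I would begin at the weak end: by Lemma \ref{lem:Vund<V*}, transcribed to the upper level via definitions \eqref{idealcostul} and \eqref{lowerlimitul}, one has $\underbar{V}_U(\boldsymbol{\xi}'_i) \leq V^*_U(\boldsymbol{\xi}'_i)$. Next, since $\boldsymbol{\xi}''_i$ is a descendant allocation of $\boldsymbol{\xi}'_i$, Lemma \ref{V*U<<V*U} supplies monotonicity of the ideal cost, $V^*_U(\boldsymbol{\xi}'_i) \leq V^*_U(\boldsymbol{\xi}''_i)$. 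Finally, applying Lemma \ref{lem:V*<V} at the allocation $\boldsymbol{\xi}''_i$ — whose lower-level optimal order $\boldsymbol{\lambda}^{\boldsymbol{\xi}''_i}_i$ satisfies \eqref{objfuncllnew} — yields $V^*_U(\boldsymbol{\xi}''_i) \leq V_L(\boldsymbol{\lambda}^{\boldsymbol{\xi}''_i}_i) = S_U(\boldsymbol{\xi}''_i) + P_L K_U(\boldsymbol{\xi}''_i)$, where the last equality uses \eqref{arclengthul} and \eqref{unvisintptul}. Concatenating the three bounds gives $\underbar{V}_U(\boldsymbol{\xi}'_i) \leq S_U(\boldsymbol{\xi}''_i) + P_L K_U(\boldsymbol{\xi}''_i)$.

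The last gap to close is that the per-glider cost inside $\mathbb{V}_U$ carries the global penalty $P_U$, whereas the lower-level lemmas deliver only the glider-local penalty $P_L$. I would resolve this by noting that $P_U \geq P_L$ for every glider — indeed $P_U$ replaces the single starting height $h^0_i$ appearing in $P_L$ by the sum $\sum_{i'} h^0_{i'}$ of all gliders' (non-negative) starting heights — so that, since $K_U \geq 0$, $S_U(\boldsymbol{\xi}''_i) + P_L K_U(\boldsymbol{\xi}''_i) \leq S_U(\boldsymbol{\xi}''_i) + P_U K_U(\boldsymbol{\xi}''_i)$. This completes the per-glider inequality, and summing over all $n_g$ gliders produces $\underline{\mathbb{V}}_U(\boldsymbol{\Xi}') \leq \mathbb{V}_U(\boldsymbol{\Xi}'')$. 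The main obstacle I anticipate is precisely this penalty-term reconciliation together with the bookkeeping of translating the lower-level lemmas, stated for a fixed glider and a fixed allocation, into the upper-level per-glider quantities; once the additive decomposition and the inclusion $\boldsymbol{\xi}'_i \subseteq \boldsymbol{\xi}''_i$ are in place, the three lemmas slot in mechanically.
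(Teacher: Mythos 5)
Your proposal is correct and follows essentially the same route as the paper's own proof: a per-glider decomposition, the chain $\underbar{V}_U(\boldsymbol{\xi}'_i) \leq V^*_U(\boldsymbol{\xi}'_i) \leq V^*_U(\boldsymbol{\xi}''_i) \leq S_U(\boldsymbol{\xi}''_i) + P_L K_U(\boldsymbol{\xi}''_i)$ built from Lemmas \ref{lem:Vund<V*}, \ref{V*U<<V*U} and \ref{lem:V*<V}, and the final reconciliation via $P_L \leq P_U$. The only cosmetic difference is that the paper treats gliders with unchanged allocations as a separate (trivial) case, whereas you handle all gliders uniformly, which is equally valid.
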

\begin{proof}
The proof is given in the Appendix.
\end{proof}

\section{Simulation Results} \label{sec:simul}
In this section we present an example of cooperative autonomous soaring of 2 gliders with 4 interest points and 4 thermals, meaning $n_g = 2, n_{ip} = 4, n_t = 4$. The maximum curvature $\kappa_{max} = 0.045 m^{-1}$ and maximum sharpness $\sigma_{max} = 0.001 m^{-2}$. Using these values of $\kappa_{max}$ and $\sigma_{max}$,
\begin{equation*}
\theta_{lim} = 2.02 \hspace{0.1cm} rad, \hspace{0.1cm} R_T = 33.8 \hspace{0.1cm} m
\end{equation*}
This satisfies assumption \ref{asm:elementary}. The starting positions, orientations, heights and final positions of the gliders are,
$$ p^0_1 = (445,709)^T m, \, p^0_2 = (646,754)^T m, $$
$$ \theta^0_1 = -1.41 rad, \, \theta^0_2 = 1.13 rad, \, h^0_1 = 600 m, \, h^0_2 = 500 m, $$
$$ p^f_1 = (765, 186)^T m, p^f_2 = (795,489)^T m $$
The positions of interest points and thermals are,
$$ p_{ip_1} = (97,950)^T m, \, p_{ip_2} = (823,34)^T m $$
$$ p_{ip_3} = (694,438)^T m, \, p_{ip_4} = (317,381)^T m $$
$$ p_{t_1} = (743,706)^T m, \, p_{t_2} = (392,32)^T m $$
$$ p_{t_3} = (655,277)^T m, \, p_{t_4} = (171,46)^T m $$
The minimum distance between waypoints $l_{min} = 108.4 \hspace{0.1cm} m$ and hence assumption \ref{asm:l_min>2R_T} is satisfied. The height a glider can gain by visiting the thermals and interest points is,
$$
h_{t_j} = 200 m, 1\leq j \leq 4, \, h_{ip_j} = 0 m, 1\leq j \leq 4.
$$
The minimum rate of descent of the gliders is chosen as $\gamma_{d_{min}} = 0.349$ radians. 

The set of interest point allocations which minimizes $V_U$ is $\hat{\boldsymbol{\Xi}} = \{\hat{\boldsymbol{\xi}}_i\}$, where the interest points allocated to the first gliders are $\hat{\boldsymbol{\xi}}_1 = \{ip_2, ip_4\}$ and the interest points allocated to the second glider are $\hat{\boldsymbol{\xi}}_2 = \{ip_1, ip_3\}$. The optimal visitation orders for each of these allocations are $\boldsymbol{\lambda}^{\hat{\boldsymbol{\xi}}_1}_1 = \{ip_4, t_3, ip_2\}$ and $\boldsymbol{\lambda}^{\hat{\boldsymbol{\xi}}_2}_2 = \{t_1, ip_1, ip_3\}$, respectively. The optimal paths associated with $\hat{\boldsymbol{\xi}}_i$, $r^{\hat{\boldsymbol{\xi}}_1}_{1,j}$ and $r^{\hat{\boldsymbol{\xi}}_2}_{2,j}$ are shown in the figure \ref{fig:Simulation}.
\begin{figure}[h!] 
\centering
\includegraphics[width=0.33\textwidth]{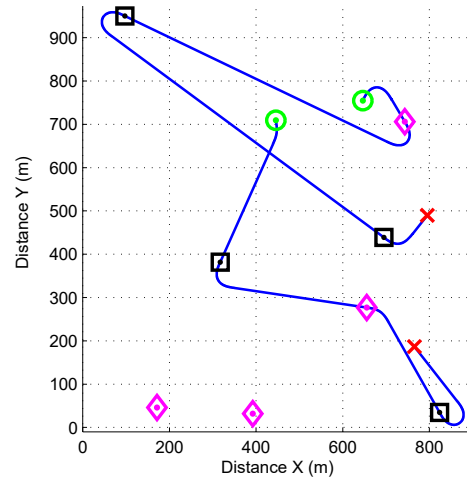}
\caption{Cooperative autonomous soaring for 2 gliders, 4 interest points and 4 thermals.}
\label{fig:Simulation}
\end{figure}

The curvature, sharpness and height profiles of the optimal paths are shown in figures \ref{fig:kappa} and \ref{fig:eta}. The paths are demonstrated to satisfy the endpoint and continuity constraints \eqref{boundarycond}-\eqref{contposorientcurv} and are also feasible and valid as they satisfy \eqref{cons:kappa_max}-\eqref{cons:h}.

Figure \ref{fig:intptchange} shows another scenario invloving 2 gliders, 3 interest points and a thermal. Here an interest point in glider 1's composite path is moved such that it comes closer to the composite path of glider 2. The algorithm determines that the more optimal solution where this interest point is in the composite path of glider 2.
\begin{figure}[h!] 
\centering
\includegraphics[width=0.5\textwidth]{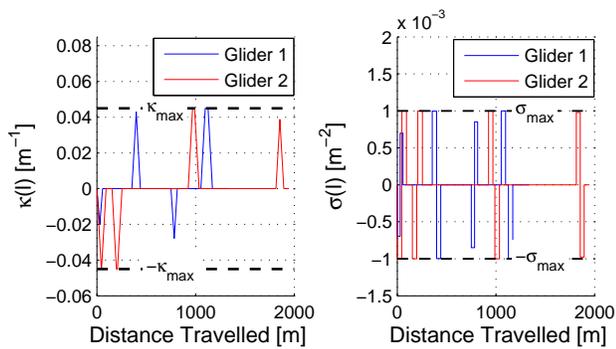}
\caption{Curvature and sharpness profiles.}
\label{fig:kappa}
\end{figure}
\begin{figure}[h!] 
\centering
\includegraphics[width=0.24\textwidth]{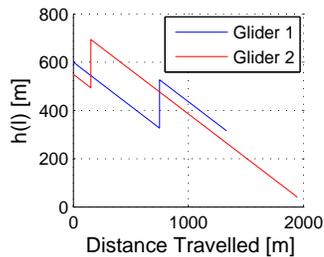}
\caption{Height profile.}
\label{fig:eta}
\end{figure}
\begin{figure}[t]
\centering
\begin{minipage}{.232\textwidth}
\centering
\includegraphics[width=1\textwidth]{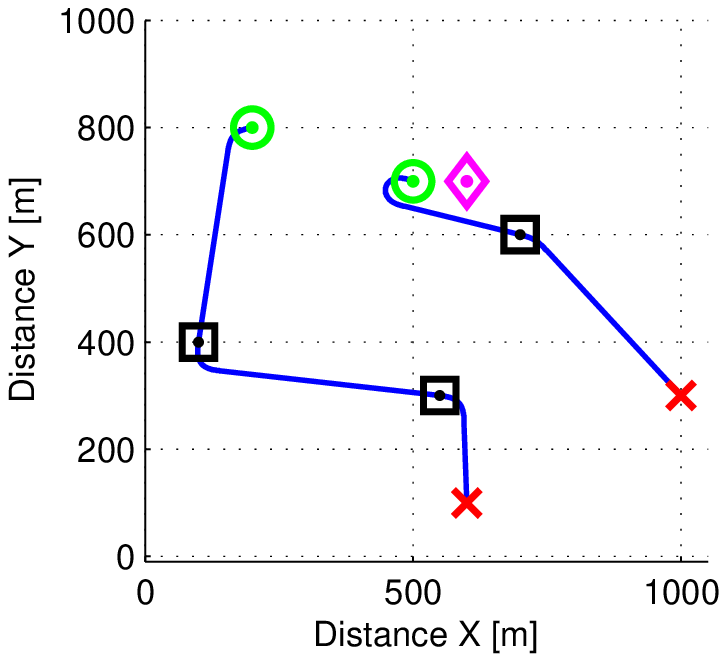}
\end{minipage} \hspace{0.1cm}
\begin{minipage}{.232\textwidth}
\centering
\includegraphics[width=1\textwidth]{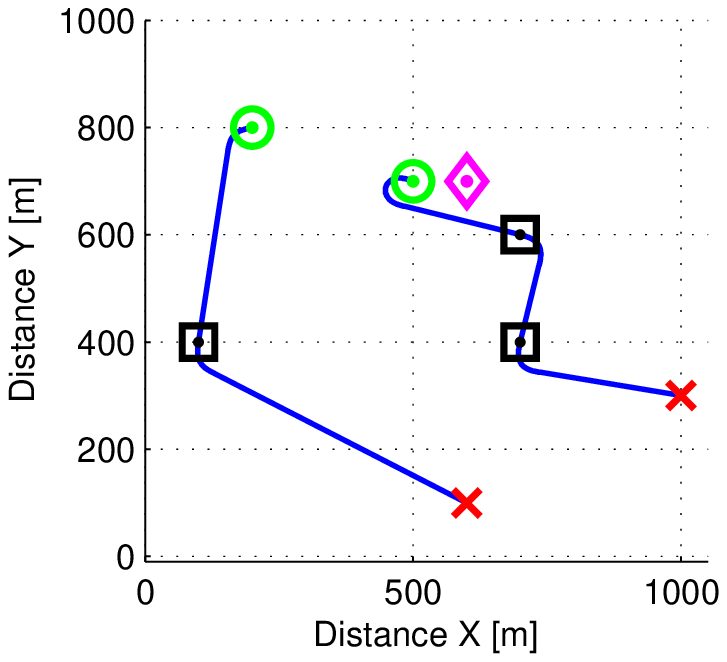}
\end{minipage}
\caption{How moving an interest point changes the optimal solution}
\label{fig:intptchange}
\end{figure}
\section{Conclusion} \label{sec:conc}
In this paper we solve the problem of planning paths for multiple gliders which have to visit a set interest points. The gliders can also use thermals to gain height and hence increase their range. First we introduce a special kind of curve, called a leg. Our algorithm uses this leg to generate composite paths for the gliders. The paths planned by the proposed algorithm are shown to be valid and feasible. Moreover they are guaranteed to cover as many interest points as possible within the particular class of composite paths introduced in the paper. Furthermore, they also have the minimum arclength of any other path with equal number of visited interest points.

Since, the approach proposed in this paper is exact (meaning it produces the best composite path form a class of composite paths) it is not particularly time efficient. Future work should focus on inexact but faster algorithms. Moreover, since the algorithm produces a composite path from a particular class of composite paths, it is sub-optimal. Hence, efforts should be made to provide bounds on measuring the sub-optimality of the algorithm.
\bibliographystyle{unsrt}
\bibliography{Bib}

\begin{thebibliography}{10}

\bibitem{zhang2012application}
Chunhua Zhang and John~M Kovacs.
\newblock The application of small unmanned aerial systems for precision
  agriculture: a review.
\newblock {\em Precision agriculture}, 13(6):693--712, 2012.

\bibitem{li2015drone}
Xu~Li, Dongning Guo, Huarui Yin, and Guo Wei.
\newblock Drone-assisted public safety wireless broadband network.
\newblock In {\em Wireless Communications and Networking Conference Workshops
  (WCNCW), 2015 IEEE}, pages 323--328. IEEE, 2015.

\bibitem{kingston2008decentralized}
Derek Kingston, Randal~W Beard, and Ryan~S Holt.
\newblock Decentralized perimeter surveillance using a team of uavs.
\newblock {\em IEEE Transactions on Robotics}, 24(6):1394--1404, 2008.

\bibitem{c1}
Paul~B. MacCready.
\newblock Optimum airspeed selector.
\newblock {\em {Journal of Soaring}}, 1958.

\bibitem{uzpath}
Muhammad~Aneeq uz~Zaman, Syed~Bilal Mehdi, and Saad bin Shams.
\newblock Path planning for cooperative autonomous soaring gliders.
\newblock In {\em MIRROR-16 Workshop at 2016 IEEE/RSJ International Conference
  on Intelligent Robots and Systems (IROS 2016)}, October 2016.

\bibitem{barraquand1989nonholonomic}
Jerome Barraquand and Jean-Claude Latombe.
\newblock On nonholonomic mobile robots and optimal maneuvering.
\newblock In {\em Intelligent Control, 1989. Proceedings., IEEE International
  Symposium on}, pages 340--347. IEEE, 1989.

\bibitem{laumond1994motion}
Jean-Paul Laumond, Paul~E Jacobs, Michel Taix, and Richard~M Murray.
\newblock A motion planner for nonholonomic mobile robots.
\newblock {\em Robotics and Automation, IEEE Transactions on}, 10(5):577--593,
  1994.

\bibitem{boissonnat1994note}
Jean-Daniel Boissonnat, Andr{\'e} Cerezo, and Juliette Leblond.
\newblock A note on shortest paths in the plane subject to a constraint on the
  derivative of the curvature.
\newblock 1994.

\bibitem{kostov1998irregularity}
V~Kostov and E~Degtiariova-Kostova.
\newblock Irregularity of optimal trajectories in a control problem for a
  car-like robot.
\newblock {\em Rapport de recherche INRIA n 3411}, 1998.

\bibitem{c29}
Ronald Choe, Venanzio Cichella, Enric Xargay, Naira Hovakimyan, Anna~C
  Trujillo, and Isaac Kaminer.
\newblock A trajectory-generation framework for time-critical cooperative
  missions.
\newblock {\em AIAA Infotech@ Aerospace, Boston, MA. AIAA}, 4582, 2013.

\bibitem{scheuer1997continuous}
Alexis Scheuer and Th~Fraichard.
\newblock Continuous-curvature path planning for car-like vehicles.
\newblock In {\em Intelligent Robots and Systems, 1997. IROS'97., Proceedings
  of the 1997 IEEE/RSJ International Conference on}, volume~2, pages 997--1003.
  IEEE, 1997.

\bibitem{c14}
Pieter Vansteenwegen, Wouter Souffriau, and Dirk Van~Oudheusden.
\newblock The orienteering problem: A survey.
\newblock {\em European Journal of Operational Research}, 209(1):1--10, 2011.

\bibitem{c8}
Steven~E Butt and David~M Ryan.
\newblock An optimal solution procedure for the multiple tour maximum
  collection problem using column generation.
\newblock {\em Computers \& Operations Research}, 26(4):427--441, 1999.

\bibitem{c9}
Sylvain Boussier, Dominique Feillet, and Michel Gendreau.
\newblock An exact algorithm for team orienteering problems.
\newblock {\em 4or}, 5(3):211--230, 2007.

\bibitem{c19}
Moritz Diehl, Hans~Georg Bock, Holger Diedam, and P-B Wieber.
\newblock Fast direct multiple shooting algorithms for optimal robot control.
\newblock In {\em Fast motions in biomechanics and robotics}, pages 65--93.
  Springer, 2006.

\bibitem{c10}
Hao Tang and Elise Miller-Hooks.
\newblock A tabu search heuristic for the team orienteering problem.
\newblock {\em Computers \& Operations Research}, 32(6):1379--1407, 2005.

\bibitem{c13}
Dinesh Thakur, Maxim Likhachev, James Keller, Vijay Kumar, Vladimir
  Dobrokhodov, Kevin Jones, Jeff Wurz, and Isaac Kaminer.
\newblock Planning for opportunistic surveillance with multiple robots.
\newblock In {\em Intelligent Robots and Systems (IROS), 2013 IEEE/RSJ
  International Conference on}, pages 5750--5757. IEEE, 2013.

\bibitem{c28}
Gerrit~JJ Ruijgrok.
\newblock {\em Elements of airplane performance}.
\newblock Delft university press, 1990.

\bibitem{liberzon2012calculus}
Daniel Liberzon.
\newblock {\em Calculus of variations and optimal control theory: a concise
  introduction}.
\newblock Princeton University Press, 2012.

\bibitem{russell1995modern}
Stuart Russell, Peter Norvig, and Artificial Intelligence.
\newblock A modern approach.
\newblock volume~25. Citeseer, 1995.

\bibitem{bertsekas1995dynamic}
Dimitri~P Bertsekas.
\newblock {\em Dynamic programming and optimal control}, volume~1.
\newblock Athena Scientific Belmont, MA, 1995.

\bibitem{meek2004note}
DS~Meek and DJ~Walton.
\newblock A note on finding clothoids.
\newblock {\em Journal of Computational and Applied Mathematics},
  170(2):433--453, 2004.

\end{thebibliography}

\section*{Appendix}
\subsection{Proof of Lemma \ref{thm:epsilon1}}
The ratio is expressed as,
\begin{equation}
\frac{l^f}{l^e} = \frac{l^s - R_T \sin(\gamma) + l^{cc}}{l^e}
\end{equation}
First, the constraints on the variables $l^e$ and $\beta$ are calculated. Then, the upper bound of $l^f/l^e$ is calculated by obtaining the upper bound of another function $\mathcal{\bar R}$ which is always larger than $\frac{l^f}{l^e}$.

$l^e \geq l_{min}$ since $l_{min}$ is the minimum distance between any two waypoints. 
$\theta^e \in [0,\pi]$ as $\Delta y \geq 0$. To obtain the limits on $\beta$, equations \ref{theta_i} and \ref{eq:beta} are used. 
For $\theta^e = 0$, $\beta = 0$. 
For $\theta^e = \pi$, we get,
\begin{multline} \label{beta_{max}}
	\beta = \pi + 2 \arcsin\Big(\frac{R_M}{l^i}\Big) = \\
	\pi + 2 \arctan \Big(\frac{R_M}{l^e + R_T \sin(\gamma)}\Big)= \beta_{max}(l^e)
\end{multline}
hence, the $0 \leq \beta \leq \beta_{max}(l^e)$.

\begin{proof}
Consider the ratio $\mathcal{\bar R}$ which is always larger than or equal to $\frac{l^f}{l^e}$, where $\mathcal{\bar R}$ is a function of variables $l^e$ and $\beta$,
\begin{equation*}
	\mathcal{\bar{R}}(l^e, \beta) = \frac{\sqrt{(l^e + R_T)^2 - R_M^2} + R_T + l^{cc}(l^e, \beta)}{l^e} \geq \frac{l^f}{l^e} .
\end{equation*}
\begin{multline*}
\mathcal{\bar{R}}(l^e, \beta)
	= \mathcal{\bar{R}}_1 + \mathcal{\bar{R}}_2 \quad \text{for} \hspace{0.1 cm} \beta \in [0, \beta_{max}], l^e \in [l_{min},\infty),
\end{multline*}
where $\mathcal{\bar{R}}_1$ and $\mathcal{\bar{R}}_2$ are defined as,
\begin{multline*}
\mathcal{\bar{R}}_1(l^e) = \frac{\sqrt{(l^e + R_T)^2 - R_M^2}}{l^e}, \mathcal{\bar{R}}_2(l^e, \beta) = \frac{l^{cc} + R_T}{l^e} .
\end{multline*}
We upper bound the two functions seperately. An upper bound on $\mathcal{\bar{R}}_2$ can be obtained as shown below.
If $\beta \in [0, \theta_{lim})$,
\begin{multline} \label{mathcalR2}
\mathcal{\bar{R}}_2(l^e, \beta) = \frac{\beta/\kappa_{max} + \kappa_{max}/\sigma_{max} + R_T}{l^e}
\\ \leq \frac{\beta_{max}/\kappa_{max} + \kappa_{max}/\sigma_{max} + R_T}{l_{min}}
\end{multline}
If $\beta \in [0,\theta_{lim})$, $\mathcal{\bar{R}}_2$ is bounded in the following way. For this section we use the Fresnal integrals $C(\theta)$ and $S(\theta)$ as defined in the paper \cite{meek2004note} rather than $FrC(\theta)$ and $FrS(\theta)$ used in section \ref{planusingccturns}. $C(\theta)$ and $S(\theta)$ are defined as,
\begin{equation*}
C(\theta) = \int\limits_{0}^{\theta} \left( \cos(u)/\sqrt{u} \right) du \hspace{0.125cm} \text{and} \hspace{0.125cm} S(\theta) = \int\limits_{0}^{\theta} \left( \sin(u)/\sqrt{u} \right) du \,.
\end{equation*}
Through a change of variables it can be shown that,
\begin{equation*}
FrC(\theta)=C(\theta^2 \pi/2)/2 \pi \hspace{0.2cm} \text{and} \hspace{0.2cm} FrS(\theta)=S(\theta^2 \pi/2)/2 \pi \,.
\end{equation*}
Hence, the value of $\mathcal{\bar{R}}_2$ in case $\beta \in [0,\theta_{lim})$ becomes,
\begin{equation*}
\mathcal{\bar{R}}_2 = \frac{l^{cc} + R_T}{l^e} = \frac{2\sqrt{2 \beta} R_T \sin(\beta/2 - \gamma) + R_T}{l^e \left( \cos(\beta/2)C(\beta/2) + \sin(\beta/2)S(\beta / 2)\right)} \,.
\end{equation*}
We use the following inequalities from paper \cite{meek2004note},
\begin{equation*}
C(\theta) > 2 \sqrt{\theta} (1 - \theta^2 /10) \hspace{0.2cm} \text{and} \hspace{0.2cm} S(\theta) > 2 \theta^{3/2} (1 - \theta^2 /14)/3 \,,
\end{equation*}
and the fact that $\beta \leq \theta_{lim} \leq \pi$ and $l^e \geq l_{min} > 0$, to obtain,
\begin{equation*}
\frac{l^{cc}}{l^e} < \frac{2 R_T \sin(\beta/2 - \gamma) + R_T}{ l^e \left(\cos(\beta/2)(1 - \beta^2/40) + \beta \sin(\beta/2)(1 - \beta^2 /56)/6 \right)} \,,
\end{equation*}
\begin{equation*}
< (4.66 R_T \sin(\beta/2 - \gamma) + R_T)/l^e < 5.66 R_T/l_{min} \,.
\end{equation*}
Hence, $\mathcal{\bar{R}}_2 < \mathcal{R}^{cc}_{max}$. \\
The upper bound of the function $\mathcal{\bar R}_1$ is found, using the Karush-Kuhn-Tucker (KKT) conditions for finding a constrained optima. The derivative of $\mathcal{\bar{R}}_1(l^e)$ w.r.t. $l^e$ is,
\begin{equation*}
\frac{\partial \mathcal{\bar R}_1}{\partial l^e} = 
\frac{-1}{{l^e}^2} \bigg(\frac{R_T l^e + R_T^2\sin^2(\gamma)}{\sqrt{(l^e + R_T)^2 - R_M^2}} \bigg).
\end{equation*}
The problem now becomes,
\begin{equation}
	\min f = -\mathcal{\bar R}_1(l^e) = -\frac{\sqrt{(l^e + R_T)^2 - R_M^2}}{l^e}
\end{equation}
subject to the constraint,
\begin{equation}
	g_1(l^e,\beta) = l_{min} - l^e \leq 0 ,
\end{equation}
The KKT conditions guarantee that exist a variable $\mu_1$, called the KKT multiplier, which satisfies the following necessary conditions, \\
\begin{equation} \label{eq:opt1}
	\frac{\partial f}{\partial l^e} + \mu_1 \frac{\partial g_1}{\partial l^e} = \frac{1}{{l^e}^2} \bigg(\frac{R_T l^e + R_T^2\sin^2(\gamma)}{\sqrt{(l^e + R_T)^2 - R_M^2}} \bigg) - \mu_1 = 0
\end{equation}
\begin{equation} \label{eq:comp1}
	\mu_1 g_1(l^e,\beta) = \mu_1 (l_{min} - l^e) = 0
\end{equation}
\begin{equation} \label{eqmu_1geq0}
	\mu_1 \geq 0
\end{equation}
Let us assume $\mu_1 = 0$. We know $\frac{\partial f}{\partial l^e} > 0$ in equation \eqref{eq:opt1}, since numerator $R_T l^e + R_T^2\sin^2(\gamma) > 0$, due to $l^e \geq l_{min} > 0$. So, if $\mu_1 = 0$ then the optimality condition \eqref{eq:opt1} does not hold. Hence it is proved that $\mu_1 \neq 0$. This fact when used with equation \eqref{eq:comp1}, gives us $l^e = l_{min}$. So, $l^e = l_{min}$ is a stationary point of $f$, since it satisfies the necessary conditions for optimality. To prove that it is an minima it has to satisfy the sufficient conditions as well.
The KKT sufficient conditions for optimality for univariate functions like $f$ is that the double derivative of $f$ should be positive.
\begin{multline*}
\frac{\partial^2 f}{{\partial l^e}^2} = \\ \frac{2 R_T {l^e}^3 + 3R_T^2(1+\sin^2(\gamma)){l^e}^2 + 6 R_T^3 \sin^2(\gamma)l^e + 2R_T^4\sin^4(\gamma)}{{l^e}^3 ({l^e}^2 + 2 R_T l^e + R_T^2 \sin^2(\gamma))} \,,
\end{multline*}
$\frac{\partial^2 f}{{\partial l^e}^2} > 0$ since all the terms in the numerator and denominator are positive due to the facts $l^e \geq l_{min} > 0$ and $R_T>0$. So this satisfies the KKT sufficient condition for global optimality and hence $\mathcal{\bar{R}}_1(l_{min})$ is the maximum value of the function $\mathcal{\bar{R}}_1(l^e)$. In conclusion,
\begin{multline*}
\mathcal{\bar{R}}(l^e, \beta) = \mathcal{\bar{R}}_1(l^e) + \mathcal{\bar{R}}_2(l^e, \beta) \leq \mathcal{\bar{R}}_1(l_{min}) + \mathcal{\bar{R}}_2(l_{min}, \beta_{max}) \\ = \frac{\sqrt{(l_{min} + R_T)^2 - R_M^2} + R_T + \frac{\beta_{max}(l_{min})}{\kappa_{max}} + \frac{\kappa_{max}}{\sigma_{max}}}{l_{min}} = \mathcal{R}_{max}
\end{multline*}

\end{proof}

\subsection{Proof of Lemma \ref{V*U<V*U}.}
Let $j \in \{1,..,n_g\}$ be the allocation which has changed from $\boldsymbol{\Xi}'$ to $\boldsymbol{\Xi}''$ and let $\hat\xi$ be the new interest point that has been allocated to $j$. Hence $ \boldsymbol{\xi}''_j = \boldsymbol{\xi}'_j \cup \{\hat{\xi}\}$ and $\forall i \neq j, \hspace{0.1cm} \boldsymbol{\xi}'_i = \boldsymbol{\xi}''_i$. Therefore, $\forall i \neq j$, $V^*_U (\boldsymbol{\xi}'_i) \leq V^*_U (\boldsymbol{\xi}''_i)$ is trivially satisfied.

For this proof the nodes which satisfy ideal optimality condition \eqref{objfuncllideal} for allocations $\boldsymbol{\xi}''_j$ and $\boldsymbol{\xi}'_j$ are denoted by $\boldsymbol{\lambda}^{**}_j$ and $\boldsymbol{\lambda}^*_j$ respectively. From the definition \eqref{idealcostul}, we know that $V^*_U (\boldsymbol{\xi}'_j) =  V^*_L (\boldsymbol{\lambda}^*_j, \boldsymbol{\xi}'_j)$ and $V^*_U (\boldsymbol{\xi}''_j) = V^*_L (\boldsymbol{\lambda}^{**}_j, \boldsymbol{\xi}''_j)$.

Before we start the proof, lets define some important sets. Let $\boldsymbol{\Lambda}'_j$ as the set of all valid goal nodes in graph $\Gamma_L(\boldsymbol{\xi}'_j)$. Similarly we define $\boldsymbol{\Lambda}''_j$ denote the set of all valid goal nodes in graph $\Gamma_L(\boldsymbol{\xi}''_j)$. Since $\boldsymbol{\Lambda}'_j$ will contain all possible valid combinations of thermals and interest points from the set $\boldsymbol{\xi}'_j$ and $\boldsymbol{\Lambda}''_j$ will contain all possible valid combinations of thermals and interest points from the set $\boldsymbol{\xi}''_j$ and we also know that $\boldsymbol{\xi}'_j \subset \boldsymbol{\xi}''_j$, its quite easy to see that $\boldsymbol{\Lambda}'_j \subset \boldsymbol{\Lambda}''_j$. 

Similarly we define $\hat{\boldsymbol{\Lambda}}'_j$ as the set of all \emph{ideally valid} goal nodes in graph $\Gamma_L(\boldsymbol{\xi}'_j)$ and $\hat{\boldsymbol{\Lambda}}''_j$ as the set of all \emph{ideally valid} goal nodes in graph $\Gamma_L(\boldsymbol{\xi}''_j)$. By similar reasoning we can deduce $\hat{\boldsymbol{\Lambda}}'_j \subset \hat{\boldsymbol{\Lambda}}''_j$. From lemma \ref{lem:validsets} we can also see that $\boldsymbol{\Lambda}'_j \subset \hat{\boldsymbol{\Lambda}}'_j$ and $\boldsymbol{\Lambda}''_j \subset \hat{\boldsymbol{\Lambda}}''_j$.

The definition of $V^*_L ()$ as described in section \ref{sec:singleglider} is also dependent on the interest points allocation $\boldsymbol{\xi}_j$. This is due to the unvisited interest points $K_L (\boldsymbol{\lambda}_j,\boldsymbol{\xi}_j)$ in the definition of $V^*_L ()$. If we define $k_{ip}(\boldsymbol{\lambda}_j)$ as the number of interest points in $\boldsymbol{\lambda}_j$, then,
\begin{equation*}
K_L (\boldsymbol{\lambda}_j,\boldsymbol{\xi}_j) = n(\boldsymbol{\xi}_j) - k_{ip}(\boldsymbol{\lambda}_j)
\end{equation*}
where $n$ depends on just the allocation and $k_{ip}$ depends on just the visitation order.

Since $ \boldsymbol{\xi}''_j = \boldsymbol{\xi}'_j \cup \{\hat{\xi}\}$, we know that $n(\boldsymbol{\xi}''_j) = n(\boldsymbol{\xi}'_j) + 1$. Substituting in $K_L$ and $k_{ip}$ we get, 
\begin{equation} \label{KLkip=KLkip+1}
K_L(\boldsymbol{\lambda}^{**}_j, \boldsymbol{\xi}''_j) + k_{ip}(\boldsymbol{\lambda}^{**}_j) = K_L(\boldsymbol{\lambda}^*_j, \boldsymbol{\xi}'_j) + k_{ip}(\boldsymbol{\lambda}^*_j) + 1
\end{equation}

\underbar{1) Consider the case where $K_L(\boldsymbol{\lambda}^{**}_j, \boldsymbol{\xi}''_j) = K_L(\boldsymbol{\lambda}^*_j, \boldsymbol{\xi}'_j)$}. Using the above shown equation we get $k_{ip}(\boldsymbol{\lambda}^{**}_j) = k_{ip}(\boldsymbol{\lambda}^*_j) + 1$. This means that $\boldsymbol{\lambda}^{**}_j$ has one more interest point than $\boldsymbol{\lambda}^*_j$. We know that $\boldsymbol{\lambda}^*_j$ is the ideally valid goal node with the least ideal cost in the graph $\Gamma_L(\boldsymbol{\xi}'_j)$. From lemma \ref{lem4_ideal} we can deduce that the $\boldsymbol{\lambda}^*_j$ has the most interest points in the set $\hat{\boldsymbol{\Lambda}}'_j$. Hence, $\boldsymbol{\lambda}^{**}_j \notin \hat{\boldsymbol{\Lambda}}'_j$ because we know that $ k_{ip}(\boldsymbol{\lambda}^*_j) < k_{ip}(\boldsymbol{\lambda}^{**}_j)$. This also means $\boldsymbol{\lambda}^{**}_j \notin \boldsymbol{\Lambda}'_j$. But by definition $\boldsymbol{\lambda}^{**}_j \in \boldsymbol{\Lambda}''_j$. This means that $\boldsymbol{\lambda}^{**}_j$ contains the interest point $\hat{\xi}$.

Let us define another visitation order $\tilde{\boldsymbol{\lambda}}_j$ by removing $\hat{\xi}$ from $\boldsymbol{\lambda}^{**}_j$. This means that $k_{ip}(\tilde{\boldsymbol{\lambda}}_j) = k_{ip}(\boldsymbol{\lambda}^*_j)$. $\tilde{\boldsymbol{\lambda}}_j \in \boldsymbol{\Lambda}'_j$ since it only contains interest points in $\boldsymbol{\xi}'_j$. By removing an interest point from $\boldsymbol{\lambda}^{**}_j$ the arclength of its ideal path cannot increase, hence $S^*_L(\tilde{\boldsymbol{\lambda}}_j) \leq S^*_L(\boldsymbol{\lambda}^{**}_j)$. Using this piece of information with the definition of ideal validity we can deduce that $\tilde{\boldsymbol{\lambda}}_j$ will be ideally valid. Hence $\tilde{\boldsymbol{\lambda}}_j \in \hat{\boldsymbol{\Lambda}}'_j$. We also know from lemma \ref{lem4_ideal} that $\boldsymbol{\lambda}^*_j$ has the smallest ideal arclength of all visitation orders with interest points equal to $k_{ip}(\boldsymbol{\lambda}^*_j)$ in the set $\hat{\boldsymbol{\Lambda}}'_j$. And since we know that $k_{ip}(\tilde{\boldsymbol{\lambda}}_j) = k_{ip}(\boldsymbol{\lambda}^*_j)$ and $\tilde{\boldsymbol{\lambda}}_j \in \hat{\boldsymbol{\Lambda}}'_j$, 

\begin{equation*}
S^*_L (\boldsymbol{\lambda}^*_j) \leq S^*_L(\tilde{\boldsymbol{\lambda}}_j) \leq S^*_L (\boldsymbol{\lambda}^{**}_j) \,,
\end{equation*}
\begin{equation*}
S^*_L (\boldsymbol{\lambda}^*_j) + K_L(\boldsymbol{\lambda}^*_j, \boldsymbol{\xi}'_j)P_L \leq S^*_L (\boldsymbol{\lambda}^{**}_j) + K_L(\boldsymbol{\lambda}^{**}_j, \boldsymbol{\xi}''_j)P_L \,,
\end{equation*}
\begin{equation*}
V^*_L (\boldsymbol{\lambda}^*_j) \leq V^*_L (\boldsymbol{\lambda}^{**}_j) \Rightarrow V^*_U (\boldsymbol{\xi}'_j) \leq V^*_U (\boldsymbol{\xi}''_j) \,.
\end{equation*}

\underbar{2) Consider the case where $K_L(\boldsymbol{\lambda}^*_j, \boldsymbol{\xi}'_j) < K_L(\boldsymbol{\lambda}^{**}_j, \boldsymbol{\xi}''_j)$}. We know that $P_L > S^{max}_L(\boldsymbol{\lambda}_i)$ for any $\boldsymbol{\lambda}_i$. We also know that $\boldsymbol{\lambda}^*_j, \boldsymbol{\lambda}^{**}_j$ are ideally valid nodes which means that $S^*_L(\boldsymbol{\lambda}^*_j) \leq S^{max}_L(\boldsymbol{\lambda}^*_j)$and $S^*_L(\boldsymbol{\lambda}^{**}_j) \leq S^{max}_L(\boldsymbol{\lambda}^{**}_j)$. Hence we get $S^*_L(\boldsymbol{\lambda}^*_j), S^*_L(\boldsymbol{\lambda}^{**}_j) \leq P_L$. Since $K_L$ can only have integer values we know $K_L(\boldsymbol{\lambda}^{**}_j, \boldsymbol{\xi}''_j) - K_L(\boldsymbol{\lambda}^*_j, \boldsymbol{\xi}'_j) \geq 1$.
\begin{equation*}
1 \leq K_L(\boldsymbol{\lambda}^{**}_j, \boldsymbol{\xi}''_j) - K_L(\boldsymbol{\lambda}^*_j, \boldsymbol{\xi}'_j)
\end{equation*}
\begin{equation*}
S^*_L(\boldsymbol{\lambda}^*_j) \leq P_L \leq K_L(\boldsymbol{\lambda}^{**}_j, \boldsymbol{\xi}''_j) P_L - K_L(\boldsymbol{\lambda}^*_j, \boldsymbol{\xi}'_j) P_L 
\end{equation*}
\begin{equation*}
S^*_L (\boldsymbol{\lambda}^*_j) + K_L(\boldsymbol{\lambda}^*_j, \boldsymbol{\xi}'_j)P_L \leq K_L(\boldsymbol{\lambda}^{**}_j, \boldsymbol{\xi}''_j)P_L
\end{equation*}
\begin{equation*}
S^*_L (\boldsymbol{\lambda}^*_j) + K_L(\boldsymbol{\lambda}^*_j, \boldsymbol{\xi}'_j)P_L \leq S^*_L (\boldsymbol{\lambda}^{**}_j) + K_L(\boldsymbol{\lambda}^{**}_j, \boldsymbol{\xi}''_j)P_L
\end{equation*}
\begin{equation*}
V^*_L (\boldsymbol{\lambda}^*_j) \leq V^*_L (\boldsymbol{\lambda}^{**}_j) \Rightarrow V^*_U (\boldsymbol{\xi}'_j) \leq V^*_U (\boldsymbol{\xi}''_j) \,.
\end{equation*}

\underbar{3) Finally we assume that $K_L(\boldsymbol{\lambda}^*_j, \boldsymbol{\xi}'_j) > K_L(\boldsymbol{\lambda}^{**}_j, \boldsymbol{\xi}''_j)$}. We show that this is impossible by using contradiction. Using the relation \eqref{KLkip=KLkip+1} along with the above shown expression we get,
\begin{equation*}
k_{ip}(\boldsymbol{\lambda}^{**}_j) > k_{ip}(\boldsymbol{\lambda}^*_j) + 1
\end{equation*}
We know that $\boldsymbol{\lambda}^{**}_j \in \hat{\boldsymbol{\Lambda}}''_j$. But we can say for sure that $\boldsymbol{\lambda}^{**}_j \notin \hat{\boldsymbol{\Lambda}}'_j$ since $\boldsymbol{\lambda}^*_j$ is known to have the highest number of interest points in $\hat{\boldsymbol{\Lambda}}'_j$ but $\boldsymbol{\lambda}^{**}_j$ has at least two more interest points than $\boldsymbol{\lambda}^*_j$. This means that $\boldsymbol{\lambda}^{**}_j$ contains the new interest point $\hat\xi$. Lets define another visitation order $\tilde{\boldsymbol{\lambda}}_j$ by removing $\hat\xi$ from $\boldsymbol{\lambda}^{**}_j$. Now we get $k_{ip}(\tilde{\boldsymbol{\lambda}}_j) > k_{ip}(\boldsymbol{\lambda}^*_j)$. We know that $\tilde{\boldsymbol{\lambda}}_j$ will have an ideally valid ideal path and it will only contain interest points from the set $\boldsymbol{\xi}'_j$, hence it must be in the set $\hat{\boldsymbol{\Lambda}}'_j$. 
This leads us to a contradiction since $k_{ip}(\tilde{\boldsymbol{\lambda}}_j) > k_{ip}(\boldsymbol{\lambda}^*_j)$ but $\boldsymbol{\lambda}^*_j$ has the most interest points in the set $\hat{\boldsymbol{\Lambda}}'_j$. This proves that the assumption $K_L(\boldsymbol{\lambda}^*_j, \boldsymbol{\xi}'_j) > K_L(\boldsymbol{\lambda}^{**}_j, \boldsymbol{\xi}''_j)$ was invalid.

\subsection{Proof of Theorem \ref{upperlevelbound}}
Lets assume $\boldsymbol{\Xi}''$ is a descendant (child, child of child ...) of the node $\boldsymbol{\Xi}'$. Let the $\mathbb{G} \subset \{1,..,n_g\}$ be the set of gliders whose allocations have changed from $\boldsymbol{\Xi}'$ to $\boldsymbol{\Xi}''$. Hence $\boldsymbol{\xi}'_i \subset \boldsymbol{\xi}''_i, \hspace{0.1cm} \forall i \in \mathbb{G} $ and $\boldsymbol{\xi}''_i = \boldsymbol{\xi}'_i, \hspace{0.1cm} \forall i \notin \mathbb{G} $.

We start by showing that 
\begin{equation} \label{upperlevelboundconjecture}
\underbar{V}_U(\boldsymbol{\xi}'_i) \leq S_U (\boldsymbol{\xi}''_i) + K_U(\boldsymbol{\xi}''_i)P_L
\end{equation}
for each glider $i$. In this proof the nodes which satisfy optimality condition \eqref{objfuncllnew} for allocations $\boldsymbol{\xi}'_i$ and $\boldsymbol{\xi}''_i$ are denoted by $\boldsymbol{\lambda}'_i$ and $\boldsymbol{\lambda}''_i$ respectively. Similarly the nodes which satisfy ideal optimality condition \eqref{objfuncllideal} for allocations $\boldsymbol{\xi}'_i$ and $\boldsymbol{\xi}''_i$ are denoted by $\boldsymbol{\lambda}^*_i$ and $\boldsymbol{\lambda}^{**}_i$ respectively. Furthermore the node which satisfies weak optimality condition \eqref{objfuncllweak} for allocation $\boldsymbol{\xi}'_i$ is denoted by $\underbar{$\boldsymbol{\lambda'}$}_i$.

\underbar{1) In case when $i \notin \mathbb{G}$}, conjecture \eqref{upperlevelboundconjecture} is easy to prove since, $\boldsymbol{\xi}''_i = \boldsymbol{\xi}'_i$ which means $\boldsymbol{\lambda}''_i = \boldsymbol{\lambda}'_i$ and $\boldsymbol{\lambda}^{**}_i = \boldsymbol{\lambda}^*_i$
\begin{equation*}
\underbar{V}_U(\boldsymbol{\xi}'_i) = \underbar{V}_L(\underbar{$\boldsymbol{\lambda'}$}_i, \boldsymbol{\xi}'_i) \hspace{0.1cm} \text{(definition \eqref{lowerlimitul})}
\end{equation*}
\begin{equation*}
\underbar{V}_U(\boldsymbol{\xi}'_i) \leq \underbar{V}_L(\boldsymbol{\lambda}'_i, \boldsymbol{\xi}'_i) \hspace{0.1cm} \text{(definition \eqref{objfuncllweak} and lemma \ref{lem:validsets})}
\end{equation*}
\begin{equation*}
\underbar{V}_U(\boldsymbol{\xi}'_i) \leq V_L(\boldsymbol{\lambda}'_i, \boldsymbol{\xi}'_i) \hspace{0.1cm} \text{(definition \eqref{eq:defweakcost})}
\end{equation*} 
\begin{equation*}
\underbar{V}_U(\boldsymbol{\xi}'_i) \leq V_L(\boldsymbol{\lambda}'_i, \boldsymbol{\xi}'_i) = V_L(\boldsymbol{\lambda}''_i, \boldsymbol{\xi}''_i) \hspace{0.1cm} \text{since} \hspace{0.1cm} \boldsymbol{\xi}''_i = \boldsymbol{\xi}'_i, \hspace{0.1cm} \boldsymbol{\lambda}''_i = \boldsymbol{\lambda}'_i
\end{equation*}
\begin{multline*}
\underbar{V}_U(\boldsymbol{\xi}'_i) \leq V_L(\boldsymbol{\lambda}''_i, \boldsymbol{\xi}''_i) = S_L(\boldsymbol{\lambda}''_i) + K_L(\boldsymbol{\lambda}''_i, \boldsymbol{\xi}''_i)P_L \\
= S_U(\boldsymbol{\xi}''_i) + K_U(\boldsymbol{\xi}''_i)P_L \hspace{0.1cm} \text{(definitions \eqref{unvisintptul},\eqref{arclengthul})}
\end{multline*}

\underbar{2) In case when $i \in \mathbb{G}$}, the proof of conjecture \eqref{upperlevelboundconjecture} can be broken down into two parts.

a) Firstly, we show that, $\underbar{V}_U(\boldsymbol{\xi}'_i) \leq V^*_U (\boldsymbol{\xi}'_i)$. 

Using definition \eqref{lowerlimitul} we get,
\begin{equation*}
\underbar{V}_U(\boldsymbol{\xi}'_i) = \underbar{V}_L(\underbar{$\boldsymbol{\lambda'}$}_i, \boldsymbol{\xi}'_i)
\end{equation*}

Using lemma \ref{lem:Vund<V*} we get,
\begin{equation*}
\underbar{V}_U(\boldsymbol{\xi}'_i) = \underbar{V}_L(\underbar{$\boldsymbol{\lambda'}$}_i, \boldsymbol{\xi}'_i) \leq V^*_L(\boldsymbol{\lambda}^*_i, \boldsymbol{\xi}'_i)
\end{equation*}

Using definition \eqref{idealcostul} we get,
\begin{equation} \label{VU<V*U}
\underbar{V}_U(\boldsymbol{\xi}'_i) \leq V^*_U (\boldsymbol{\xi}'_i)
\end{equation}

b) Then we show that, $V^*_U (\boldsymbol{\xi}''_i) \leq S_U (\boldsymbol{\xi}''_i) + K_U(\boldsymbol{\xi}''_i)P_L $.

From definitions \eqref{unvisintptul}, \eqref{arclengthul} and \eqref{idealcostul} we know that $ V^*_U (\boldsymbol{\xi}''_i) = V^*_L (\boldsymbol{\lambda}^{**}_i, \boldsymbol{\xi}''_i)$ and $S_U (\boldsymbol{\xi}''_i) + K_U(\boldsymbol{\xi}''_i)P_L =  V_L(\boldsymbol{\lambda}''_i, \boldsymbol{\xi}''_i)$.

From lemma \ref{lem:V*<V} we know that $V^*_L (\boldsymbol{\lambda}^{**}_i, \boldsymbol{\xi}''_i) \leq V_L(\boldsymbol{\lambda}''_i, \boldsymbol{\xi}''_i)$, hence we get,
\begin{equation} \label{V*U<SUKU}
V^*_U (\boldsymbol{\xi}''_i) \leq S_U (\boldsymbol{\xi}''_i) + K_U(\boldsymbol{\xi}''_i)P_L
\end{equation}

If we use equations \eqref{VU<V*U} and \eqref{V*U<SUKU} with lemma \ref{V*U<<V*U}, we get,
\begin{equation*}
\underbar{V}_U(\boldsymbol{\xi}'_i) \leq S_U (\boldsymbol{\xi}''_i) + K_U(\boldsymbol{\xi}''_i)P_L
\end{equation*}

Now if we sum over all gliders we come up with,
\begin{equation*}
\underline{\mathbb{V}}_U(\boldsymbol{\Xi}') \leq \sum_{\forall i} S_U (\boldsymbol{\xi}''_i) + \sum_{\forall i} K_U(\boldsymbol{\xi}''_i)P_L
\end{equation*}
\begin{equation*}
\underline{\mathbb{V}}_U(\boldsymbol{\Xi}') \leq \mathbb{V}_U({\boldsymbol{\Xi}''}), \hspace{0.1cm} \text{(since $P_L < P_U$ )}
\end{equation*}
\end{document}